\theoremstyle{plain}
\newtheorem{theorem}{Theorem}[section]
\newtheorem{corollary}{Corollary}[section]
\theoremstyle{definition}
\newtheorem{definition}{Definition}[section]
\newtheorem{example}{Example}[section]
\begin{document}
	
	\title{Study of the generalized von mangoldt function defined by L-additive function}
	
	\author{
		{ \sc Es-said En-naoui } \\ 
		Faculty Of Science And Technics,University Sultan Moulay Slimane\\ Morocco\\
		essaidennaoui1@gmail.com\\
		\\
	}
	
	\maketitle

	\begin{abstract}
		The main object of this paper is to study the generalized von mangoldt function using the L-additive function, which can help us give many result about the classical arithmetic function.
	\end{abstract}
	
	\section{Introduction}	
	In this work, we address two major result: Dirichlet product of generalized von Mangoldt function with an arithmetic function $f$ and dirichlet serie of many arithmetics function completely additive .
	\medskip
	
	In this article, we study the generalized function von Mangoldt by the L-additive function to find alternative proof for many series expansions that depend on the arithmetic function additive and completely additive.
	The methods readily generalize, and can be applied to other L-additive functions. Our  principal result are that :
	$$
	\sum \limits_{n\geq 1} \frac{\Lambda_{f}(n)}{n^s}=
	\sum \limits_{p}\frac{f(p)}{h_f(p)p^{s}-h_f(p)}
	$$	where $f$ is L-additive function  with $h_f$ is nonzero-valued .
	\medskip
		
	First of all, to cultivate analytic number theory one must acquire a considerable skill for operating with arithmetic functions. we begin with a few elementary considerations. 
	\medskip
	
	\begin{definition}[arithmetic function]
		An \textbf{arithmetic function} is a function $f:\mathbb{N}\longrightarrow \mathbb{C}$ with
		domain of definition the set of natural numbers $\mathbb{N}$ and range a subset of the set of complex numbers $\mathbb{C}$.
	\end{definition}
	
	\begin{definition}[multiplicative function]
		A function $f$ is called an \textbf{multiplicative function} if and
		only if : 
		\begin{equation}\label{equ-1}
			f(nm)=f(n)f(m)
		\end{equation}
	
		for every pair of coprime integers $n$,$m$. In case (\ref{equ-1}) is satisfied for every pair of integers $n$ and $m$ , which are not necessarily coprime, then the function $f$ is
		called \textbf{completely multiplicative}.
	\end{definition}
	
	Clearly , if $f$ are a multicative function , then
	$f(n)=f(p_1^{\alpha _1})\ldots f(p_s^{\alpha _s})$, 
	for any positive integer $n$ such that 
	$n = p_1^{\alpha _1}\ldots  p_s^{\alpha _s}$ , and if $f$ is completely multiplicative , so we have : $f(n)=f(p_1)^{\alpha _1}\ldots f(p_s)^{\alpha _s}$.
	
	The functions defined above are widely studied in the literature, (see, e.g., \cite{KM,Mc,Sc, Sh}). 
	
	\bigskip
	\begin{definition}[additive function]
		A function $f$ is called an \textbf{additive function} if and
		only if : 
		\begin{equation}
			f(nm)=f(n)+f(m)
		\end{equation}
		for every pair of coprime integers $n$,$m$. In case (3) is satisfied for every pair of integers $n$ and $m$ , which are not necessarily coprime, then the function $f$ is
		called \textbf{completely additive}.
	\end{definition}
	Clearly , if $f$ are a additive function , then
	$f(n)=f(p_1^{\alpha _1})+\ldots +f(p_s^{\alpha _s})$, 
	for any positive integer $n$ such that 
	$n = p_1^{\alpha _1}\ldots  p_s^{\alpha _s}$ , and if $f$ is completely additive , so we have : $f(n)=\alpha _1f(p_1)+\ldots +\alpha _sf(p_s)$.
	
\begin{example}
	This is the some classical arithmetic functions used in this paper :
	\begin{enumerate}
		\item \textbf{the arithmetic logarithmic derivative } :\;$Ld(n)=\sum \limits_{p^{\alpha}\parallel n}\frac{\alpha}{p}$.
		\item \textbf{The number of prime factors of n counting multiplicity } 
		:\;$\Omega(n)=\sum \limits_{p^{\alpha}\parallel n}\alpha$.
		\item \textbf{The generalization of  the number of prime factors of n counting multiplicity}  
		:\;$\Omega_k(n)=\sum \limits_{p^{\alpha}\parallel n}\alpha^k$ .
		\item  The function defined by : $A(n)=\sum \limits_{p^{\alpha}\parallel n}\alpha p$.
		
		\item \textbf{Unit function }: The function defined by 
		$e(n)=\left\{ \begin{array}{cl}
			1 & \textrm{if }\;\;n=1\\
			0& \textrm{for }\;\; n\geq 2\\
		\end{array}\right.$
		\item \textbf{the unit function }: The function defined by $1(n)=1$ for all $n>1$.  
		\item \textbf{Logarithm function }: the (natural) logarithm, restricted to $\mathbb{N}$ and regarded as an arithmetic function.
		\item \textbf{The sum k-th powers of the prime factors of n} :\;$\beta_k(n)=\sum\limits_{p|n}p^k$ .
		\item \textbf{The number of distinct prime divisors of n} :\;$\omega(n)=\beta_0(n)=\sum \limits_{p|n}1$ .
		\item \textbf{The sum of the prime factors of n} :\;$\beta(n)=\beta_1(n)=\sum \limits_{p|n}p$.
		\item \textbf{The Mobiuse function } 
		:\;$\mu(n)=\left\{ \begin{array}{cl}
			1 & \textrm{if }\;\;n=1\\
			0& \textrm{if }\;\; p^2|n\;\; for\; some\; prime\; p\\
			(-1)^{\omega(n)} & \textrm{otherwise}\\
		\end{array}\right.$
		\item \textbf{number of positive divisors of $n$} defined by :  $\tau(n)=\sum \limits_{d|n}1$ .
	\end{enumerate}
	
\end{example}
	\bigskip
	\begin{definition}[L-additive function]
		We say that an arithmetic function $f$ is {\em Leibniz-additive} (or, {\em L-additive}, in short) (see, e.g., \cite{Ufna}) if there is a completely multiplicative function $h_f$ such that 
		\begin{equation}\label{gca}
			f(mn)=f(m)h_f(n)+f(n)h_f(m)
		\end{equation}
		for all positive integers $m$ and $n$. 
	\end{definition}
	Then $f(1)=0$ since $h_f(1)=1$. 
	The property \eqref{gca} may be considered a generalized Leibniz rule.
	For example, the arithmetic derivative $\delta$ is L-additive with $h_\delta(n)=n$, 
	since it satisfies the usual Leibniz rule 
	$$
	\delta(mn)=n\delta(m)+m\delta(n)
	$$
	for all positive integers $m$ and $n$, and the function $h_\delta(n)=n$ is completely multiplicative. 
	Further, all completely additive functions $f$ are L-additive with $h_f(n)=1$. For example, the logarithmic derivative of $n$ is completely additive since
	$$
	{\rm ld}(mn) = {\rm ld}(m)+{\rm ld}(n).
	$$
	
	\begin{theorem}\label{the-1-1}
	Let $f$ be an arithmetic function. If $f$ is L-additive and
	$h_f$ is nonzero-valued, then $f/h_f$ is completely additive.
\end{theorem}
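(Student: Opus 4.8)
The plan is to work directly from the two defining identities and verify the complete-additivity equation $g(mn)=g(m)+g(n)$ pointwise, where $g:=f/h_f$ denotes the quotient function $g(n)=f(n)/h_f(n)$. First I would observe that the hypothesis that $h_f$ is nonzero-valued is exactly what makes $g$ a well-defined arithmetic function $\mathbb{N}\to\mathbb{C}$, so there is no ambiguity in forming the ratio at any argument.

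Next I would fix arbitrary positive integers $m$ and $n$ and compute $g(mn)$ by inserting the generalized Leibniz rule \eqref{gca} in the numerator and the complete multiplicativity of $h_f$ in the denominator. Concretely, the single key step is the chain
$$
g(mn)=\frac{f(mn)}{h_f(mn)}=\frac{f(m)h_f(n)+f(n)h_f(m)}{h_f(m)h_f(n)}=\frac{f(m)}{h_f(m)}+\frac{f(n)}{h_f(n)}=g(m)+g(n),
$$
where the second equality uses \eqref{gca} together with $h_f(mn)=h_f(m)h_f(n)$, and the third is just termwise cancellation. Since $m$ and $n$ were arbitrary (with no coprimality assumed), this establishes that $g$ satisfies the defining identity of a \emph{completely} additive function.

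The honest remark here is that there is no serious obstacle: the result is a direct algebraic consequence of the definitions, and the entire content is the one displayed computation. The only point requiring any care is the well-definedness of the quotient, which is precisely why the hypothesis ``$h_f$ is nonzero-valued'' is imposed; without it the division step could fail at the primes (or prime powers) where $h_f$ vanishes. I would therefore structure the write-up as (i) a sentence justifying that $g$ is well defined, and (ii) the displayed identity above, and then conclude by appealing to the definition of complete additivity.
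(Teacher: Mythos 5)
Your proof is correct: the hypothesis that $h_f$ is nonzero-valued makes $g=f/h_f$ well defined, and the single displayed chain of equalities, using \eqref{gca} in the numerator and the complete multiplicativity $h_f(mn)=h_f(m)h_f(n)$ in the denominator, is exactly the standard argument. The paper itself gives no proof here (it only cites Theorem 2.1 of the reference by Haukkanen, Merikoski, and Tossavainen), and your computation is precisely the one that citation supplies, so you have taken essentially the same approach while making it self-contained.
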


\begin{proof} 
	(see, e.g., \cite[Theorem 2.1]{Ufna})	
\end{proof}

\begin{theorem}\label{totally}
	Let $f$ an arithmetic function, if $n=\prod_{i=1}^{s}p_{i}^{\alpha_{i}}$ is the prime factorization of $n$ and $f$ is L-additive with $h_f(p_1),\dots,h_f(p_s)\ne 0$, then 
	$$
	f(n)=h_f(n)\sum_{i=1}^s\frac{\alpha_i f(p_i)}{h_f(p_i)}. 
	$$		
\end{theorem}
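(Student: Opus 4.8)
The plan is to reduce the identity to the prime-power case and then assemble the distinct primes using the Leibniz rule \eqref{gca}, the same rule that underlies Theorem \ref{the-1-1}. First I would record that the hypothesis $h_f(p_1),\dots,h_f(p_s)\ne 0$ forces $h_f(n)=\prod_{i=1}^s h_f(p_i)^{\alpha_i}\ne 0$, since $h_f$ is completely multiplicative; this guarantees that $h_f(n)$ and each quotient $f(p_i)/h_f(p_i)$ are well defined, so dividing through by $h_f(n)$ is legitimate and the right-hand side makes sense.

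The core computation is a two-step factorization. For a single prime power I would prove by induction on $\alpha$ that
$$
f(p^{\alpha})=\alpha\,h_f(p)^{\alpha-1}f(p),
$$
the base case $\alpha=1$ being immediate and the inductive step following from \eqref{gca} applied to $p^{\alpha+1}=p\cdot p^{\alpha}$, together with $h_f(p^{\alpha})=h_f(p)^{\alpha}$. Second, I would write $n=p_1^{\alpha_1}\cdot m$ with $m=n/p_1^{\alpha_1}$ coprime to $p_1$, apply \eqref{gca} once more, and induct on the number $s$ of distinct primes, inserting the prime-power identity into the first summand and the induction hypothesis into the second. After factoring out $h_f(n)$, the two contributions merge into the single sum claimed, because each term $\alpha_i\,h_f(p_i)^{\alpha_i-1}f(p_i)\prod_{j\ne i}h_f(p_j)^{\alpha_j}$ is exactly $h_f(n)\,\alpha_i f(p_i)/h_f(p_i)$.

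The step I expect to be the main obstacle is not any individual computation but the bookkeeping that lets $h_f(n)$ factor out cleanly, and a subtle point about the hypotheses. Theorem \ref{the-1-1} assumes $h_f$ is nonzero-valued on all of $\mathbb{N}$, whereas here only the finitely many values $h_f(p_i)$ for $p_i\mid n$ are assumed nonzero; this is precisely why I would favor the direct inductive argument above rather than simply quoting complete additivity of $f/h_f$, since the induction only ever divides by the $h_f(p_i)$ actually appearing in the factorization of $n$. Once the prime-power identity is established, the remaining algebra of recognizing each term as a multiple of $h_f(n)$ is routine and completes the proof.
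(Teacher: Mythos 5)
Your proof is correct, and it is worth noting that the paper itself supplies no argument for this theorem at all: its ``proof'' is only the citation \cite[Theorem 2.4]{Ufna}. Your self-contained double induction is therefore a genuine addition rather than a rephrasing. It also differs from the route the surrounding text suggests, namely invoking Theorem \ref{the-1-1} to get complete additivity of $f/h_f$ and then expanding over the prime factorization; as you rightly point out, that shortcut requires $h_f$ to be nonzero-valued on all of $\mathbb{N}$, while your induction divides only by the finitely many values $h_f(p_i)$ with $p_i\mid n$, so it proves the theorem under exactly its stated, weaker hypothesis. Two small refinements of your own bookkeeping: your prime-power identity $f(p^{\alpha})=\alpha\,h_f(p)^{\alpha-1}f(p)$ needs no nonvanishing hypothesis whatsoever (it is pure algebra from \eqref{gca} plus complete multiplicativity of $h_f$), the hypothesis entering only when you rewrite $\alpha\,h_f(p)^{\alpha-1}f(p)$ as $h_f(p^{\alpha})\,\alpha f(p)/h_f(p)$; and the coprimality of $p_1^{\alpha_1}$ and $m=n/p_1^{\alpha_1}$ in your second induction is never used, since \eqref{gca} holds for \emph{all} pairs of positive integers. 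Both inductions close correctly (base cases $\alpha=1$ and $s=1$; in the step, $h_f(p_1^{\alpha_1})h_f(m)=h_f(n)$ lets the two contributions merge into the claimed sum), so the argument is complete as written.
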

\begin{proof}
	(see, e.g., \cite[Theorem 2.4]{Ufna})
\end{proof}
\begin{corollary}\label{cor-1-1}
	Let $f$ an arithmetic function, if $f$ is L-additive  with $h_f$ is nonzero-valued, then we have :
	$$
	(f\ast h_f)(n) =\frac{1}{2}f(n)\tau(n)
	$$		
\end{corollary}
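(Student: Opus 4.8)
The plan is to reduce the Dirichlet convolution $(f\ast h_f)(n)$ to a single divisor sum of the completely additive function $g:=f/h_f$, which is well-defined and completely additive by Theorem \ref{the-1-1} precisely because $h_f$ is nonzero-valued. Writing $f=g\cdot h_f$ and expanding the convolution,
$$
(f\ast h_f)(n)=\sum_{d\mid n} f(d)\,h_f(n/d)=\sum_{d\mid n} g(d)\,h_f(d)\,h_f(n/d).
$$
First I would exploit that $h_f$ is completely multiplicative, so $h_f(d)\,h_f(n/d)=h_f(n)$ for every divisor $d$ of $n$; this factor is independent of $d$ and pulls out of the sum, leaving
$$
(f\ast h_f)(n)=h_f(n)\sum_{d\mid n} g(d).
$$

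The core of the argument is then to evaluate $\sum_{d\mid n} g(d)$ for a completely additive $g$. Here I would use the involution $d\mapsto n/d$ on the set of divisors of $n$: since $g$ is completely additive, $g(d)+g(n/d)=g\!\left(d\cdot n/d\right)=g(n)$ for each divisor $d$. Summing this identity over all $d\mid n$, and noting that $d\mapsto n/d$ merely permutes the divisors, the two copies of $\sum_{d\mid n} g(d)$ add to $\tau(n)\,g(n)$, whence
$$
\sum_{d\mid n} g(d)=\frac{1}{2}\,\tau(n)\,g(n).
$$

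Combining the two displays yields $(f\ast h_f)(n)=\tfrac12\,\tau(n)\,h_f(n)\,g(n)=\tfrac12\,\tau(n)\,f(n)$, since $f=g\cdot h_f$, which is the claim. I expect no serious obstacle: the only points requiring genuine care are the appeal to Theorem \ref{the-1-1} to license the factorization $f=g\,h_f$ with $g$ completely additive, and checking the degenerate case $n=1$ separately, where both sides vanish because $f(1)=0$ and $\tau(1)=1$. As an alternative to the pairing trick, one could instead substitute the explicit formula of Theorem \ref{totally}, namely $g(n)=\sum_i \alpha_i f(p_i)/h_f(p_i)$, and evaluate $\sum_{d\mid n}\beta_i(d)=\tfrac12\,\alpha_i\,\tau(n)$ for the exponent $\beta_i(d)$ of $p_i$ in $d$; the involution argument is cleaner, but this direct computation confirms the same constant $\tfrac12$.
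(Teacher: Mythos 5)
Your proof is correct, but note that the paper itself never proves Corollary~\ref{cor-1-1}: its ``proof'' is only a pointer to \cite[Corollary 3.1]{Ufna}, so your proposal supplies an argument the paper leaves entirely external. Every step checks out. Complete multiplicativity of $h_f$ gives $h_f(d)\,h_f(n/d)=h_f(n)$ for all $d\mid n$, so the factor pulls out; Theorem~\ref{the-1-1} (which uses exactly the hypothesis that $h_f$ is nonzero-valued) makes $g=f/h_f$ completely additive; and the involution $d\mapsto n/d$ together with $g(d)+g(n/d)=g(n)$ gives $\sum_{d\mid n}g(d)=\tfrac12\tau(n)g(n)$ --- this is the classical pairing trick behind $\sum_{d\mid n}\log d=\tfrac12\tau(n)\log n$, of which the corollary is the natural L-additive generalization. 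Reassembling via $f=g\,h_f$ yields the claim, and you rightly note the degenerate case $n=1$ is harmless since $f(1)=0$. Your fallback route through Theorem~\ref{totally} is also sound: for $n=\prod_i p_i^{\alpha_i}$, the sum over divisors of the exponent of $p_i$ equals $\bigl(\prod_{j\ne i}(\alpha_j+1)\bigr)\cdot\tfrac12\alpha_i(\alpha_i+1)=\tfrac12\alpha_i\tau(n)$, confirming the constant $\tfrac12$; this is closer in spirit to how the cited source argues from the prime-factorization formula, whereas your pairing argument is shorter and avoids any appeal to the factorization of $n$. One cosmetic caution: your notation $\beta_i(d)$ for the exponent of $p_i$ in $d$ collides with the paper's $\beta_k(n)=\sum_{p\mid n}p^k$, so rename that symbol if this proof is spliced into the text.
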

\begin{proof}
	(see, e.g., \cite[Corollary 3.1]{Ufna})
\end{proof}
\medskip

Further, all completely additive functions $f$ are L-additive with $h_f(n) = 1$ , then the extended of completely addtive function to the set of rational number $\mathbb{Q}$ give us this formula :
$$f\left(\frac{n}{m} \right) =f(n)-f(m)$$ 

For example, the logarithmic derivative of $n$ is completely additive, then we have :
$$ Ld\left(\frac{n}{m} \right)=Ld(n)-Ld(m)$$
	
	Let $f$ and $g$ be arithmetic functions. Their {\it Dirichlet convolution} is :
	\begin{equation}\label{dir-prod}
	(f\ast g)(n)=\sum_{\substack{a,b=1\\ab=n}}^n f(a)g(b)=\sum_{\substack{d|n}}^n f(d)g\left(\frac{n}{d} \right) .
	\end{equation}

	where the sum extends over all positive divisors $d$ of $n$ , or equivalently over all distinct pairs $(a, b)$ of positive integers whose product is $n$.\\
	In particular, we have $(f*g)(1)=f(1)g(1)$ ,$(f*g)(p)=f(1)g(p)+f(p)g(1)$ for any prime $p$ and for any power prime $p^m$ we have :
	\begin{equation}
		(f*g)(p^m)=\sum \limits_{j=0}^m f(p^j)g(p^{m-j})
	\end{equation}


	In this paper, we consider L-additive functions, especially from the viewpoint that they are a way to generalizations of von mangoldt function. In the next section, we present their general basic properties. In the last section, we study the application of this generalizations  in terms of the Dirichlet convolution and Dirichlet series .
	
\section{The generalized von mangoldt function using L-additive function :}
	In this section , let $f$ L-additive function with $h_f$ is nonzero-valued, then  Now we defined the von Mangoldt function  related to The function $f$ by :
	\begin{equation}\label{def-mangol}
		\Lambda_f (n)={\begin{cases}\frac{f(p)}{h_f(p)}&{\text{if }}n=p^{k}{\text{ for some prime }}p{\text{ and integer }}k\geq 1,\\0&{\text{otherwise.}}\end{cases}}
	\end{equation}
	then we have this result :
	\begin{theorem}\label{the-2-1}
		If $n\geq1$ then we have : 
		$$
		f(n)=h_f(n)\sum_{\substack{d|n}} \Lambda_f(d)
		$$
		That mean by using dirichlet convolution : $f=h_f\ast h_f\Lambda_f$
	\end{theorem}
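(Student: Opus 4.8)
The plan is to reduce the statement to Theorem~\ref{totally}, whose right-hand side already displays $f(n)$ as $h_f(n)$ times a sum running over the prime divisors of $n$. First I would fix a positive integer $n$ with prime factorization $n=\prod_{i=1}^{s}p_i^{\alpha_i}$ and evaluate the divisor sum $\sum_{d\mid n}\Lambda_f(d)$ directly from the definition \eqref{def-mangol}. By that definition $\Lambda_f(d)$ vanishes unless $d$ is a prime power, so the only divisors of $n$ that contribute are those of the form $p_i^{k}$ with $1\le k\le\alpha_i$; for each such divisor the value is $f(p_i)/h_f(p_i)$, independent of the exponent $k$.

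Next I would count these contributions. For each fixed prime $p_i$ there are exactly $\alpha_i$ admissible exponents $k$, each yielding the same value $f(p_i)/h_f(p_i)$. Summing over all prime divisors gives
$$
\sum_{d\mid n}\Lambda_f(d)=\sum_{i=1}^{s}\alpha_i\,\frac{f(p_i)}{h_f(p_i)}.
$$
Multiplying both sides by $h_f(n)$ and invoking Theorem~\ref{totally}, the right-hand side becomes precisely $f(n)$, which establishes the first displayed identity.

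Finally I would translate this into the convolution statement $f=h_f\ast(h_f\Lambda_f)$. Expanding the convolution according to \eqref{dir-prod} and using that $h_f$ is completely multiplicative, so that $h_f(d)\,h_f(n/d)=h_f(n)$ for every divisor $d$ of $n$, I can pull the factor $h_f(n)$ outside the sum and recognize the remaining factor as $\sum_{d\mid n}\Lambda_f(d)$ after the reindexing $d\mapsto n/d$. This makes the two forms of the statement manifestly equivalent.

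The argument is essentially a bookkeeping computation, so I do not anticipate a genuine obstacle; the one point requiring care is the passage between the two forms of the identity, where the complete multiplicativity of $h_f$ is exactly what allows the factor $h_f(n)$ to be extracted from the convolution. I would also note the parallel with the classical case $f=\log$ and $h_f=\mathbf{1}$, in which the identity recovers the familiar relation $\log n=\sum_{d\mid n}\Lambda(d)$ for the ordinary von Mangoldt function.
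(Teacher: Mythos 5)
Your proposal is correct and follows essentially the same route as the paper: evaluate $\sum_{d\mid n}\Lambda_f(d)$ directly from the definition, observe that only the prime-power divisors $p_i^k$ ($1\le k\le\alpha_i$) contribute $f(p_i)/h_f(p_i)$ each, and identify the resulting sum $\sum_{i=1}^{s}\alpha_i f(p_i)/h_f(p_i)$ with $f(n)/h_f(n)$ via Theorem~\ref{totally}. In fact your write-up is slightly tidier than the paper's, which contains an index typo (its inner sum reads $\sum_{k=1}^{i}$ where $\sum_{k=1}^{\alpha_i}$ is meant), and you additionally spell out the passage to the convolution form $f=h_f\ast(h_f\Lambda_f)$ using complete multiplicativity of $h_f$, a step the paper merely asserts.
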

	\begin{proof}
		If $n = p_1^{\alpha _1}\ldots  p_s^{\alpha _s}$ then we have :
		$$
		\sum_{\substack{d|n}} \Lambda_f(d)=\sum_{\substack{i=1}}^s \sum_{\substack{k=1}}^i \Lambda_f(p_i^k)=\sum_{\substack{i=1}}^s \sum_{\substack{k=1}}^i \frac{f(p_i)}{h_f(p_i)}=
		\sum_{\substack{i=1}}^s \frac{if(p_i)}{h_f(p_i)}=\frac{f(n)}{h_f(n)}
		$$
		as claimed \\
	\end{proof}
	\begin{theorem}\label{theo-2-2}
		for every positive integer $n$ we have : 
		$$
		{\displaystyle \Lambda_f (n)
			=\sum _{d\mid n}\frac{\mu \left(\frac{n}{d} \right)f\left(d \right) }{h_f\left(d\right)}}
		=-\sum_{\substack{d|n}}\frac{\mu(d)f(d)}{h_f(d)}
		$$
		That is we have : $$\Lambda_f=\mu\ast \frac{f}{h_f}=-1\ast\frac{\mu f}{h_f}$$
	\end{theorem}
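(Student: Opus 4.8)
The plan is to prove the identity $\Lambda_f = \mu \ast \frac{f}{h_f}$ by Möbius inversion applied to Theorem~\ref{the-2-1}, and then to establish the second equality $\mu \ast \frac{f}{h_f} = -1 \ast \frac{\mu f}{h_f}$ as a separate convolution identity.

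First I would rewrite Theorem~\ref{the-2-1} in the clean divisor-sum form it already provides, namely $\frac{f(n)}{h_f(n)} = \sum_{d \mid n} \Lambda_f(d)$. By Theorem~\ref{the-1-1}, the function $g := f/h_f$ is well-defined (since $h_f$ is nonzero-valued) and completely additive; more importantly for us, the displayed identity says precisely that $g = 1 \ast \Lambda_f$, i.e. $g$ is the Dirichlet convolution of the constant function $1$ with $\Lambda_f$. The standard Möbius inversion formula then gives $\Lambda_f = \mu \ast g = \mu \ast \frac{f}{h_f}$, which written out as a divisor sum is exactly $\sum_{d \mid n} \mu(n/d)\, \frac{f(d)}{h_f(d)}$. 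This handles the first equality.

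For the second equality, I would show $\mu \ast \frac{f}{h_f} = -1 \ast \frac{\mu f}{h_f}$ by a direct manipulation of the divisor sum, using that $f/h_f$ is completely additive. Writing out $\left(\mu \ast \frac{f}{h_f}\right)(n) = \sum_{d \mid n} \mu(d)\, \frac{f(n/d)}{h_f(n/d)}$ and applying complete additivity in the form $\frac{f(n/d)}{h_f(n/d)} = \frac{f(n)}{h_f(n)} - \frac{f(d)}{h_f(d)}$, the sum splits into two pieces. The first piece is $\frac{f(n)}{h_f(n)} \sum_{d \mid n} \mu(d)$, which vanishes for $n > 1$ because $\sum_{d \mid n} \mu(d) = 0$; the $n=1$ case is checked separately and is trivial since $f(1)=0$. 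The remaining piece is $-\sum_{d \mid n} \mu(d)\, \frac{f(d)}{h_f(d)}$, which is exactly $-\left(1 \ast \frac{\mu f}{h_f}\right)(n)$, as desired.

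The main obstacle is the second equality rather than the first: the first is a direct application of Möbius inversion, but the second requires invoking complete additivity of $f/h_f$ in the subtractive form $g(n/d) = g(n) - g(d)$, and then carefully justifying that the cross term drops out via the classical identity $\sum_{d\mid n}\mu(d) = [n=1]$. I would take care to state explicitly that complete additivity is what licenses the step $g(n/d) = g(n) - g(d)$ (this is the extension to rationals noted earlier in the paper, $f(n/m) = f(n) - f(m)$, applied to $g$), and to dispatch the $n=1$ boundary case so the vanishing of $\sum_{d\mid n}\mu(d)$ can be used cleanly for $n>1$.
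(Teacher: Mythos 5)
Your proposal is correct and takes essentially the same route as the paper: the first equality via M\"obius inversion of Theorem~\ref{the-2-1}, and the second by rewriting $\frac{f(n/d)}{h_f(n/d)}=\frac{f(n)}{h_f(n)}-\frac{f(d)}{h_f(d)}$ (which the paper unwinds directly from the L-additivity relation and complete multiplicativity of $h_f$, rather than citing Theorem~\ref{the-1-1} as you do), splitting the divisor sum, and killing the cross term with $\sum_{d\mid n}\mu(d)=e(n)$. Your explicit treatment of the $n=1$ boundary case via $f(1)=0$ is if anything slightly more careful than the paper, which absorbs it silently into the term $f(n)e(n)/h_f(n)$.
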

	\begin{proof}
		By Theorem (\ref{the-2-1}) applying Mobius inversion, we have $\Lambda_f=\mu\ast \frac{f}{h_f}$ , Note that :
		\begin{align*}
			\sum _{d\mid n}\frac{\mu \left(\frac{n}{d} \right)f\left(d \right) }{h_f\left(d\right)} &=
			\sum _{d\mid n} \mu(d)\frac{f\left(\frac{n}{d} \right)}{h_f\left(\frac{n}{d} \right)}
			=\sum \limits_{d|n} \mu(d)\frac{h_f(d)}{h_f(n)}
			\bigg( \frac{h_f(d)f(n)-h_f(n)f(d)}{h_f^2(d)}\bigg)
			\\
			&=\sum \limits_{d|n} \frac{\mu(d)h^2_f(d)f(n)}{h_f(n)h^2_f(d)}
			-\frac{\mu(d)h_f(n)h_f(d)f(d)}{h_f(n)h^2_f(d)}
			\\
			&=\frac{f(n)}{h_f(n)}\sum \limits_{d|n} \mu(d)
			-\sum \limits_{d|n}\frac{\mu(d)f(d)}{h_f(d)}
			\\
			& = \frac{f(n)e(n)}{h_f(n)}-\sum \limits_{d|n}\frac{\mu(d)f(d)}{h_f(d)}
			\\
			&=-\sum \limits_{d|n}\frac{\mu(d)f(d)}{h_f(d)}
		\end{align*}
		Hence $\Lambda_f (n)=-1\ast\frac{\mu f}{h_f}$. This completes the proof .
	\end{proof}
	
	\begin{corollary}\label{cor-2-1}
		Let $f$ an arithmetic function. If $f$ is L-additive and $h_f$ is nonzero-valued, then :
		\begin{eqnarray}
			(\tau\ast \Lambda_f)(n) =\frac{f(n)\tau(n)}{2h_f(n)} 
		\end{eqnarray}
	\end{corollary}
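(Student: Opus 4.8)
The plan is to reduce the convolution $\tau\ast\Lambda_f$ to a plain divisor sum and then invoke the results already established. First I would record the identity furnished by Theorem \ref{the-2-1}, namely $f/h_f = 1\ast\Lambda_f$, which is simply the relation $f(n)/h_f(n)=\sum_{d\mid n}\Lambda_f(d)$ rewritten with the unit function $1(n)=1$. Since the divisor function factors as $\tau = 1\ast 1$, the commutativity and associativity of Dirichlet convolution give
$$
\tau\ast\Lambda_f = (1\ast 1)\ast\Lambda_f = 1\ast(1\ast\Lambda_f) = 1\ast\frac{f}{h_f},
$$
so that $(\tau\ast\Lambda_f)(n)=\sum_{d\mid n}\frac{f(d)}{h_f(d)}$.

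Next I would connect this sum to the convolution appearing in Corollary \ref{cor-1-1}. Because $h_f$ is completely multiplicative and nonzero-valued, one has $h_f(n/d)=h_f(n)/h_f(d)$ for every divisor $d$ of $n$, whence
$$
(f\ast h_f)(n) = \sum_{d\mid n} f(d)\,h_f\!\left(\frac{n}{d}\right) = h_f(n)\sum_{d\mid n}\frac{f(d)}{h_f(d)} = h_f(n)\,(\tau\ast\Lambda_f)(n).
$$
Dividing by the nonzero quantity $h_f(n)$ expresses $(\tau\ast\Lambda_f)(n)$ as $(f\ast h_f)(n)/h_f(n)$.

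Finally, I would apply Corollary \ref{cor-1-1}, which states $(f\ast h_f)(n)=\frac{1}{2}f(n)\tau(n)$, to obtain
$$
(\tau\ast\Lambda_f)(n) = \frac{(f\ast h_f)(n)}{h_f(n)} = \frac{f(n)\tau(n)}{2h_f(n)},
$$
as claimed. I do not anticipate a genuine obstacle here; the only point requiring care is the legitimate division by $h_f(n)$, which is guaranteed by the standing hypothesis that $h_f$ is nonzero-valued. As a self-contained alternative that avoids citing Corollary \ref{cor-1-1}, one could set $g:=f/h_f$, which is completely additive by Theorem \ref{the-1-1}, so that $g(d)+g(n/d)=g(n)$; pairing each divisor $d$ with its complement $n/d$ then yields $2\sum_{d\mid n}g(d)=\tau(n)g(n)$, and substituting $g=f/h_f$ gives the same conclusion. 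This pairing argument is in fact precisely the mechanism underlying Corollary \ref{cor-1-1}.
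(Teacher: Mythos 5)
Your main argument is correct and essentially the paper's own proof: both route through Corollary \ref{cor-1-1} together with Theorem \ref{the-2-1}, the factorization $\tau=1\ast 1$, and the complete multiplicativity of $h_f$ to pull $h_f(n)$ out of the convolution, merely performing the same steps in a slightly different order (the paper convolves $f=h_f\ast h_f\Lambda_f$ with $h_f$ and factors out $h_f(n)$ from the triple convolution, while you expand $(f\ast h_f)(n)$ directly). Your appended divisor-pairing alternative via the complete additivity of $f/h_f$ (Theorem \ref{the-1-1}) is also valid and, as you observe, is just the mechanism behind Corollary \ref{cor-1-1} made explicit, so it adds self-containment but no new idea.
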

	\begin{proof}
		By the Corollary  (\ref{cor-1-1}) we have that  :
		$$
		(f\ast h_f)(n) =\frac{1}{2}f(n)\tau(n)
		$$
		Since from the theorem (\ref{the-2-1}) we know :
		$$
		f(n)=\left( h_f\ast h_f\Lambda_f\right) (n)
		$$
		Then we find that :
		$$
		\left(h_f\ast f \right)(n)= \left(h_f\ast  h_f\ast h_f\Lambda_f \right)(n)=
		h_f(n)\left(1\ast 1\ast\Lambda_f \right)(n) 
		$$
		We conclude that :
		$$
		h_f(n)\left(\tau\ast\Lambda_f \right)(n) =\frac{1}{2}\tau(n)f(n)
		$$
		This completes the proof of Theorem
	\end{proof}
\begin{theorem}\label{theo-2-3}
	Let $g$ an arithmetic function , if $g$ is completely additive then we have :
	\begin{equation}
		\left(1\ast g\Lambda_f \right) (n)=\frac{1}{2}\sum \limits_{p^{\alpha}||n} \frac{\alpha(\alpha+1)f(p)g(p)}{h_f(p)}
	\end{equation}
\begin{proof}
Let $g$ an arithmetic function completely additive , then  :
\begin{align*}
\left(1\ast g\Lambda_f \right) (n)&=	\sum _{d\mid n} g(d)\Lambda_f(d) =\sum \limits_{p^{\alpha}||n}\sum \limits_{i=1}^{i=\alpha} g(p^{i})\Lambda_f(p^{i}) 
	\\
	&=\sum \limits_{p^{\alpha}||n}\sum \limits_{i=1}^{i=\alpha} \frac{ig(p)f(p)}{h_f(p)}
	\\
	&=\sum \limits_{p^{\alpha}||n}\frac{g(p)f(p)}{h_f(p)}\sum \limits_{i=1}^{i=\alpha} i
	\\
	&=\sum \limits_{p^{\alpha}||n} \frac{\alpha(\alpha+1)f(p)g(p)}{2h_f(p)}
\end{align*}	
\end{proof}
\end{theorem}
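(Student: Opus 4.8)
The plan is to unwind the Dirichlet convolution directly and exploit the fact that $\Lambda_f$ is supported only on prime powers. First I would write $(1\ast g\Lambda_f)(n)=\sum_{d\mid n}g(d)\Lambda_f(d)$ using the definition \eqref{dir-prod}, and then observe from \eqref{def-mangol} that every term with $d$ not a prime power contributes $0$. Hence the sum collapses to a double sum over the prime-power divisors of $n$: writing $n=\prod_{p^{\alpha}\parallel n}p^{\alpha}$, the prime-power divisors are precisely $p^{i}$ with $1\le i\le\alpha$, so $(1\ast g\Lambda_f)(n)=\sum_{p^{\alpha}\parallel n}\sum_{i=1}^{\alpha}g(p^{i})\Lambda_f(p^{i})$. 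This is the same telescoping over prime-power divisors already carried out in the proof of Theorem \ref{the-2-1}.

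The second step invokes the two structural hypotheses. By \eqref{def-mangol} we have $\Lambda_f(p^{i})=f(p)/h_f(p)$ for \emph{every} exponent $i\ge 1$, so this factor is constant in $i$. Since $g$ is completely additive, $g(p^{i})=i\,g(p)$. Substituting both expressions turns the inner sum into $\sum_{i=1}^{\alpha}i\,g(p)f(p)/h_f(p)$, and because the factor $g(p)f(p)/h_f(p)$ no longer depends on $i$ it pulls out of the sum over $i$.

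Finally I would evaluate the elementary arithmetic sum $\sum_{i=1}^{\alpha}i=\alpha(\alpha+1)/2$ and absorb the constant, which immediately yields $(1\ast g\Lambda_f)(n)=\tfrac12\sum_{p^{\alpha}\parallel n}\alpha(\alpha+1)f(p)g(p)/h_f(p)$, as claimed. There is no substantive obstacle in this argument; the one point that genuinely drives the computation is the constancy of $\Lambda_f(p^{i})$ in $i$, which is exactly what lets the sum over exponents decouple into $g(p)f(p)/h_f(p)$ times $\sum_{i=1}^{\alpha}i$. Everything else is the standard bookkeeping of reducing a Dirichlet convolution against a prime-power-supported function to a sum over the local exponents $\alpha$.
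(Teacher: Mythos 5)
Your proposal is correct and follows essentially the same route as the paper's own proof: both reduce the convolution to $\sum_{d\mid n}g(d)\Lambda_f(d)$, collapse it onto the prime-power divisors $p^{i}$ with $1\le i\le\alpha$, substitute $g(p^{i})=i\,g(p)$ and $\Lambda_f(p^{i})=f(p)/h_f(p)$, and finish with $\sum_{i=1}^{\alpha}i=\alpha(\alpha+1)/2$. There is nothing to add or repair.
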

\begin{theorem}\label{theo-2-4}
	Let $g$ an arithmetic function , if $g$ is completely additive then we have :
	\begin{equation}
		\left(\Lambda_f\ast g \right) (n)=\frac{f(n)g(n)}{h_f(n)}-\left(1\ast g\Lambda_f \right) (n)
	\end{equation}
\end{theorem}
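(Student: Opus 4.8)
The plan is to unwind both Dirichlet convolutions from their definitions and then to exploit the complete additivity of $g$ to merge them into a single sum. Writing out \eqref{dir-prod}, we have $(\Lambda_f\ast g)(n)=\sum_{d\mid n}\Lambda_f(d)\,g(n/d)$, while the subtracted term is $(1\ast g\Lambda_f)(n)=\sum_{d\mid n}g(d)\Lambda_f(d)$. Hence the claimed identity is equivalent to
$$
\sum_{d\mid n}\Lambda_f(d)\bigl(g(n/d)+g(d)\bigr)=\frac{f(n)g(n)}{h_f(n)},
$$
so my first step would be to move the subtracted term to the left-hand side and combine the two sums over the common divisor index $d$.

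The crucial observation is that for every divisor $d$ of $n$ the factorization $n=d\cdot(n/d)$ together with the complete additivity of $g$ yields $g(d)+g(n/d)=g(n)$. Substituting this into the combined sum collapses the $d$-dependence of the $g$-factor, so that
$$
\sum_{d\mid n}\Lambda_f(d)\bigl(g(n/d)+g(d)\bigr)=g(n)\sum_{d\mid n}\Lambda_f(d).
$$
From here I would invoke Theorem~\ref{the-2-1}, which gives $\sum_{d\mid n}\Lambda_f(d)=f(n)/h_f(n)$, to conclude that the right-hand side equals $g(n)f(n)/h_f(n)$, which is exactly the required value.

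I expect no genuine obstacle in this argument; the only point demanding care is the justification that $g(d)+g(n/d)=g(n)$ holds for \emph{every} divisor $d$ of $n$. This is precisely where complete additivity is needed rather than mere additivity, since $d$ and $n/d$ will in general share a prime factor, and the coprimality-restricted additive identity would not apply. Everything else is a reshuffling of finite sums together with a single appeal to Theorem~\ref{the-2-1}.
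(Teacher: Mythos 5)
Your proposal is correct and takes essentially the same route as the paper: both arguments rest on the complete-additivity identity $g(d)+g(n/d)=g(n)$ (the paper substitutes $g(n/d)=g(n)-g(d)$ and splits the sum, while you move the subtracted convolution across and merge---algebraically the identical step) together with a single appeal to Theorem~\ref{the-2-1} for $\sum_{d\mid n}\Lambda_f(d)=\frac{f(n)}{h_f(n)}$. Your closing remark also correctly pinpoints the only delicate point, namely that mere additivity would not suffice because $d$ and $n/d$ need not be coprime.
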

\begin{proof}
	Let $g$ an arithmetic function completely additive ,since $g\left(\frac{n}{d} \right)=g(n)-g(d) $ then  :
	\begin{align*}
		\left(\Lambda_f\ast g \right) (n)&=	\sum _{d\mid n}\Lambda_f(d)g\left(\frac{n}{d} \right)  = \sum _{d\mid n}\Lambda_f(d)\left(g(n)-g(d) \right) 
		\\
		&=\sum _{d\mid n}\Lambda_f(d)g(n)-\sum _{d\mid n}\Lambda_f(d)g(d)
		\\
		&=g(n)\sum _{d\mid n}\Lambda_f(d)-\sum _{d\mid n}\Lambda_f(d)g(d)
	\end{align*}
Since :
$$
\sum _{d\mid n}\Lambda_f(d)g(d)=\left( 1\ast g\Lambda_f\right) (n)
$$	
And by theorem (\ref{the-2-1}) we have :
$$
\sum _{d\mid n}\Lambda_f(d)=\frac{f(n)}{h_f(n)}
$$
Then we conclude that :
$$
\left(\Lambda_f\ast g \right) (n)=\frac{f(n)g(n)}{h_f(n)}-\left( 1\ast g\Lambda_f\right) (n)
$$
\end{proof}
\begin{theorem}\label{theo-2-5}
	Let $f$ and $g$ two L-additive functions , then we have :
\begin{equation}
	\left(\Lambda_f\ast\Lambda_g \right) (n)={\begin{cases}\frac{(\alpha-1)f(p)g(p)}{h_f(p)h_g(p)}&{\text{if }}n=p^{\alpha}{\text{ for some prime }}p{\text{ and integer }}\alpha\geq 1,\\
		\frac{f(p)g(q)}{h_f(p)h_g(q)}+\frac{f(q)g(p)}{h_f(q)h_g(p)}
			&{\text{if }}n=p^{\alpha}q^{\beta}{\text{ for some prime }}p,q{\text{ and integer }}\alpha,\beta\geq 1
			\\0&{\text{otherwise.}}\end{cases}}
\end{equation}
\end{theorem}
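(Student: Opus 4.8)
The plan is to compute the Dirichlet convolution directly from its definition,
$$
(\Lambda_f \ast \Lambda_g)(n) = \sum_{d \mid n} \Lambda_f(d)\,\Lambda_g\!\left(\tfrac{n}{d}\right),
$$
and to exploit the fact that both factors are supported only on prime powers. The decisive observation is that $\Lambda_f(d)\neq 0$ forces $d=p^{k}$ with $k\geq 1$, and likewise $\Lambda_g(n/d)\neq 0$ forces $n/d$ to be a prime power with exponent at least $1$; in particular both factors vanish at the argument $1$, since $\Lambda_f(1)=\Lambda_g(1)=0$. Hence a divisor $d$ contributes to the sum only when $d$ and $n/d$ are \emph{simultaneously} prime powers, and everything reduces to counting such factorizations according to the shape of $n$.

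First I would treat $n=p^{\alpha}$. Writing $d=p^{k}$ with $0\le k\le\alpha$, the factor $\Lambda_f(p^{k})$ vanishes at $k=0$ and $\Lambda_g(p^{\alpha-k})$ vanishes at $k=\alpha$, so only the indices $1\le k\le\alpha-1$ survive. Each surviving term equals $\frac{f(p)}{h_f(p)}\cdot\frac{g(p)}{h_g(p)}$ by the definition of the two von Mangoldt functions, and there are exactly $\alpha-1$ of them, which yields $\frac{(\alpha-1)f(p)g(p)}{h_f(p)h_g(p)}$. This already explains why the $\alpha=1$ case produces $0$.

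Next I would treat $n=p^{\alpha}q^{\beta}$ with $p\neq q$. A divisor $d=p^{i}q^{j}$ can be a prime power only when one of its exponents vanishes, and requiring that the complement $n/d=p^{\alpha-i}q^{\beta-j}$ also be a nontrivial prime power then forces exactly the two factorizations $(d,n/d)=(p^{\alpha},q^{\beta})$ and $(q^{\beta},p^{\alpha})$. Evaluating $\Lambda_f$ and $\Lambda_g$ on these two splittings gives the two summands $\frac{f(p)g(q)}{h_f(p)h_g(q)}$ and $\frac{f(q)g(p)}{h_f(q)h_g(p)}$. For the remaining case I would note that if $n=1$ the only term is $\Lambda_f(1)\Lambda_g(1)=0$, while if $n$ has at least three distinct prime factors, then for any prime-power divisor $d$ the complement $n/d$ still retains at least two distinct primes and cannot be a prime power, so the sum is empty and equals $0$.

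The computation is essentially elementary once the support restriction is in place; the only point that needs care is the bookkeeping of the boundary indices $k=0$ and $k=\alpha$ in the single-prime case, which is precisely what turns the naive count of $\alpha+1$ divisors into the correct multiplicity $\alpha-1$. I expect no genuine obstacle beyond this careful handling of the endpoints and the verification that the ``otherwise'' case truly admits no admissible factorization.
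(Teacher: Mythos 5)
Your proposal is correct and follows essentially the same route as the paper: a direct computation of $\sum_{d\mid n}\Lambda_f(d)\Lambda_g\left(\frac{n}{d}\right)$ using the support of $\Lambda_f$ and $\Lambda_g$ on prime powers, with the same three-way case split on $n=p^{\alpha}$, $n=p^{\alpha}q^{\beta}$, and $\omega(n)>2$ (the paper handles the endpoint indices $k=0$ and $k=\alpha$ implicitly by summing from $1$ to $\alpha-1$, exactly the bookkeeping you make explicit). Your treatment of the ``otherwise'' case is in fact slightly cleaner than the paper's, which also contains a harmless typo ($\Lambda_g(q^{\alpha})$ where $\Lambda_g(p^{\alpha})$ is meant), but the argument is the same.
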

\begin{proof}
Let $f$ and $g$ two L-additive functions , and $n>1$ .\\
if $n=p^{\alpha}$ then we have :
\begin{align*}
	\left(\Lambda_f\ast\Lambda_g \right) (n)&=	\sum _{d\mid n}\Lambda_f(d)\Lambda_g\left(\frac{n}{d} \right)  = \sum \limits_{i=1}^{\alpha-1} \Lambda_f(p^{i})\Lambda_g(p^{\alpha-i}) 
	\\
	&= \sum \limits_{i=1}^{\alpha-1} \frac{f(p)}{h_f(p)}\frac{g(p)}{h_g(p)}
	\\
	&= \frac{(\alpha-1)f(p)g(p)}{h_f(p)h_g(p)}
\end{align*}
if $n=p^{\alpha}q^{\beta}$ then :
\begin{align*}
	\left(\Lambda_f\ast\Lambda_g \right) (n)&=	\sum _{d\mid n}\Lambda_f(d)\Lambda_g\left(\frac{n}{d} \right)  = \sum \limits_{i=1}^{\alpha} \Lambda_f(p^{i})\Lambda_g\left(\frac{n}{p^{i}} \right) +\sum \limits_{i=1}^{\beta} \Lambda_f(q^{i})\Lambda_g\left(\frac{n}{q^{i}} \right)
	\\
	&= \sum \limits_{i=1}^{\alpha} \Lambda_f(p^{i})\Lambda_g\left(p^{\alpha-i}q^{\beta} \right) +\sum \limits_{j=1}^{\beta} \Lambda_f(q^{j})\Lambda_g\left(p^{\alpha} q^{\beta-j}\right)
	\\
	&= \Lambda_f(p^{\alpha})\Lambda_g(q^{\beta})+\Lambda_f(q^{\beta})\Lambda_g(q^{\alpha})
	\\
	&= \frac{f(p)g(q)}{h_f(p)h_g(q)}+\frac{f(q)g(p)}{h_f(q)h_g(p)}
\end{align*}
Now if $\omega(n)>2$ , the for every divisor $d$ of $n$ we have $\Lambda_f(d)=0$ or $\Lambda_f\left(\frac{n}{d} \right) $ , then $\left(\Lambda_f\ast\Lambda_g \right) (n)=0$
\end{proof}
	\begin{theorem}\label{theo-2-6}
	Let $s$ a complex number  , if  then we have :
	$$
	\sum \limits_{n\geq 1} \frac{\Lambda_{f}(n)}{n^s}=
	\sum \limits_{p}\frac{f(p)}{h_f(p)p^{s}-h_f(p)}
	$$	
\end{theorem}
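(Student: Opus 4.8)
The plan is to expand the Dirichlet series directly from the definition of $\Lambda_f$ in \eqref{def-mangol} and then collapse the resulting double sum with a geometric series. First I would note that $\Lambda_f(n)$ vanishes unless $n$ is a prime power, so the sum over all $n\geq 1$ reduces to a double sum indexed by a prime $p$ and an exponent $k\geq 1$:
$$
\sum_{n\geq 1}\frac{\Lambda_f(n)}{n^s}=\sum_{p}\sum_{k=1}^{\infty}\frac{\Lambda_f(p^{k})}{p^{ks}}.
$$
The structural fact I would exploit is that $\Lambda_f(p^{k})=f(p)/h_f(p)$ is independent of $k$, so the inner sum factors as $\frac{f(p)}{h_f(p)}\sum_{k=1}^{\infty}p^{-ks}$, pulling the constant $f(p)/h_f(p)$ outside.

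Next I would evaluate the inner geometric series. For $\Re(s)>0$ one has $|p^{-s}|<1$, hence $\sum_{k=1}^{\infty}p^{-ks}=\frac{p^{-s}}{1-p^{-s}}=\frac{1}{p^{s}-1}$. Substituting this back gives
$$
\sum_{p}\frac{f(p)}{h_f(p)}\cdot\frac{1}{p^{s}-1}=\sum_{p}\frac{f(p)}{h_f(p)p^{s}-h_f(p)},
$$
which is precisely the claimed identity after clearing the denominator $h_f(p)(p^{s}-1)=h_f(p)p^{s}-h_f(p)$. Thus the whole proof is a rearrangement of a double sum followed by summing one geometric progression.

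The only genuine issue, which the statement leaves implicit (the hypothesis after ``if'' is missing), is convergence. The inner geometric series converges for $\Re(s)>0$, but the interchange that passes from $\sum_{n}$ to $\sum_{p}\sum_{k}$, as well as the convergence of the outer sum over primes, requires a growth hypothesis on $f(p)/h_f(p)$ together with $\Re(s)$ large enough. For a completely additive $f$ (where $h_f\equiv 1$) the series behaves like $\sum_{p}f(p)p^{-s}$ and converges in a suitable right half-plane. I would therefore either state the identity as a formal equality of Dirichlet series, or add the assumption that $\Re(s)$ exceeds the abscissa of absolute convergence of $\sum_{n}\Lambda_f(n)n^{-s}$, which legitimizes the rearrangement by absolute convergence. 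Once this hypothesis is pinned down, no further difficulty remains; the computation itself is routine.
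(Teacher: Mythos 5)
Your proposal is correct and follows essentially the same route as the paper's own proof: reduce the series to a double sum over primes $p$ and exponents $k\geq 1$, use the fact that $\Lambda_f(p^k)=f(p)/h_f(p)$ is independent of $k$, and sum the geometric series $\sum_{k\geq 1}p^{-ks}=1/(p^s-1)$. Your closing remark on convergence is in fact more careful than the paper, which merely assumes $\mathrm{Re}(s)>0$ in its proof and leaves the hypothesis in the theorem statement blank.
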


\begin{proof}
	Let $s$ a number complex such that $Re(s)>0$ , then 
	\begin{align*}
		\sum \limits_{n\geq 1} \frac{\Lambda_{f}(n)}{n^s} &=\frac{\Lambda_{f}(1)}{1^s}+\frac{\Lambda_{f}(2)}{2^s}+\frac{\Lambda_{f}(3)}{3^s}+\frac{\Lambda_{f}(4)}{4^s}+\frac{\Lambda_{f}(5)}{5^s}+\ldots +\frac{\Lambda_{f}(16)}{16^s}+\ldots
		\\
		&=\frac{f(2)}{h_f(2)2^{s}}+\frac{f(3)}{h_f(3)3^{s}}+\frac{f(2)}{h_f(2)2^{2s}}+\frac{f(5)}{h_f(5)5^{s}}+\frac{f(7)}{h_f(7)7^{s}}+\frac{f(2)}{h_f(2)2^{3s}}+\ldots+\frac{f(2)}{h_f(2)2^{4s}}+\ldots
		\\
		&=\sum \limits_{p}\sum \limits_{k\geq 1}\frac{f(p)}{h_f(p)p^{ks}}= \sum \limits_{p}\frac{f(p)}{h_f(p)}\sum \limits_{k\geq 1}\frac{1}{p^{ks}}
		\\
		& =\sum \limits_{p}\frac{f(p)}{h_f(p)}\sum \limits_{k\geq 1}\bigg(\frac{1}{p^{s}} \bigg)^k=
		\sum \limits_{p}\frac{f(p)}{h_f(p)}.\frac{1}{p^s}.\frac{1}{1-\frac{1}{p^s}}
		\\
		&=\sum \limits_{p}\frac{f(p)}{h_f(p)p^{s}-h_f(p)}
	\end{align*}
	which completes the proof
\end{proof}

\subsection{ The  derivatives of arithmetical functions using L-additive function}
Let $f$ L-additive function with $h_f$ is nonzero-valued, Now we defined the derivatives of arithmetical functions  related to The function $f$ by :
\begin{definition}
	For any arithmetical function $g$ we define its derivative $g'$ to be the arithmetical function given by the equation : 
	$$
	g'(n)=\frac{g(n)f(n)}{h_f(n)}\;\;\;\;for \;\;\; n\geqslant 1
	$$
\end{definition}
Since $e(n)\frac{f(n)}{h_f(n)}=0$ for all $n$ we have $e'(n)=0$. \\
Since $1'(n)=\frac{f(n)}{h_f(n)}$ for all $n$ Hence, the formula $\sum_{\substack{d|n}} \Lambda_f(d)=\frac{f(n)}{h_f(n)}$ can be 
written as 
\begin{equation}\label{equ-9}
	1'(n)=\left( 1\ast \Lambda_f \right) (n)
\end{equation}
This concept of derivative using L-additive function shares many of the properties of the ordinary  derivative discussed in elementary calculus. For example, the usual rules for differentiating sums and products also hold if the products are Dirichlet products. 
\begin{theorem}
	If $g$ and $h$ are arithmetical functions we have: 
	\begin{enumerate}
		\item[a)] $\left(g+h \right)'(n)=g'(n)+h'(n) $
		\item[b)] $\left(g\ast h \right)'(n)=\left(g'\ast h \right) (n)+\left(g\ast h' \right) (n) $
	\end{enumerate}
\end{theorem}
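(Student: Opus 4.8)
The plan rests on a single structural observation. Write $L:=f/h_f$; by Theorem~\ref{the-1-1} the function $L$ is \emph{completely} additive, and the derivative introduced above is simply multiplication by $L$, that is, $g'(n)=g(n)L(n)$ for every arithmetical function $g$. The fact that the additivity is complete — so that $L(mn)=L(m)+L(n)$ for \emph{all} pairs $m,n$, not merely coprime ones — is precisely the ingredient that will drive part b).

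Part a) I would dispose of immediately by distributivity: since $g\mapsto gL$ is linear, $(g+h)'(n)=\big(g(n)+h(n)\big)L(n)=g'(n)+h'(n)$, and no arithmetic property of $L$ is used.

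For part b) I would expand $(g\ast h)'(n)=L(n)\sum_{d\mid n}g(d)h(n/d)$ and then insert, for each divisor $d\mid n$, the additive splitting $L(n)=L(d)+L(n/d)$ coming from the factorization $n=d\cdot(n/d)$. Distributing $L(n)$ across the convolution sum and regrouping then yields
$$
(g\ast h)'(n)=\sum_{d\mid n}g'(d)h(n/d)+\sum_{d\mid n}g(d)h'(n/d)=(g'\ast h)(n)+(g\ast h')(n),
$$
where in the first sum $L(d)$ has been absorbed into $g(d)$ and in the second $L(n/d)$ into $h(n/d)$.

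The one delicate point — and the step I would flag as the crux rather than an obstacle — is the splitting $L(n)=L(d)+L(n/d)$. A general divisor $d$ and its cofactor $n/d$ need not be coprime, so a merely additive $L$ would fail this identity; it is the strengthening to complete additivity guaranteed by Theorem~\ref{the-1-1} that makes the equality valid for every divisor and hence makes the Leibniz rule work. Once this is in place the remainder is bookkeeping, and I expect no genuine difficulty.
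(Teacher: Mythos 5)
Your proof is correct and takes essentially the same route as the paper: the paper's proof of part b) rests on exactly the identity $\frac{f(n)}{h_f(n)}=\frac{f(d)}{h_f(d)}+\frac{f(n/d)}{h_f(n/d)}$, i.e.\ the complete additivity of $L=f/h_f$, and distributes it across the convolution sum just as you do. The only difference is that you explicitly invoke Theorem~\ref{the-1-1} to justify the splitting for non-coprime $d$ and $n/d$ — a point the paper uses without comment — which is a welcome bit of extra care rather than a divergence.
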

\begin{proof}
	The proof of $(a)$ is immediate. Of course, and to prove $(b)$ we use the identity  $\frac{f(n)}{h_f(n)}=\frac{f(d)}{h_f(d)}+\frac{f(\frac{n}{d})}{h_f(\frac{n}{d})}$ to write :
	\begin{align*}
		\left(g\ast h \right)'(n)
		&=\sum \limits_{d|n} g(d)h\left(\frac{n}{d} \right)\frac{f(n)}{h_f(n)}
		\\
		&=\sum \limits_{d|n} g(d)\frac{f(d)}{h_f(d)}h\left(\frac{n}{d} \right)+
		\sum \limits_{d|n} g(d)h\left(\frac{n}{d} \right)\frac{f(\frac{n}{d})}{h_f(\frac{n}{d})}
		\\
		&=\sum \limits_{d|n}  \frac{g(d)f(d)}{h_f(d)}h\left(\frac{n}{d} \right)+
		\sum \limits_{d|n} g(d)\frac{h\left(\frac{n}{d} \right)f(\frac{n}{d})}{h_f(\frac{n}{d})}
		\\
		& =\left(g'\ast h \right) (n)+\left(g\ast h' \right) (n)
	\end{align*}	
\end{proof}
	\begin{theorem}[Ennaoui-Selberg identity.]\label{theo-2-8}
	For $n>1$ we have:
	\begin{equation}
		\frac{\Lambda_f(n)f(n)}{h_f(n)}+\sum \limits_{d|n} \Lambda_f(d)\Lambda_f\left(\frac{n}{d} \right) =\sum \limits_{d|n}\mu\left(\frac{n}{d} \right)\frac{f^2\left(d \right) }{h^2_f\left( d\right) }
	\end{equation}
	by using Dirichlet product that mean :
	\begin{equation}
		\frac{f\Lambda_f}{h_f}+\Lambda_f\ast\Lambda_f=\mu\ast\left(\frac{f}{h_f} \right) ^2
	\end{equation}
\end{theorem}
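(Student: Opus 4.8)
The plan is to mirror the classical derivation of the Selberg identity, using the Dirichlet derivative $g'(n)=g(n)f(n)/h_f(n)$ introduced above in place of multiplication by $\log n$. Write $F:=f/h_f$, which is completely additive by Theorem~\ref{the-1-1}, so that $g'=gF$ (pointwise) for every arithmetic function $g$. I will rely on three facts already at hand: the product rule $(g\ast h)'=g'\ast h+g\ast h'$ from the preceding theorem; the representation $1\ast\Lambda_f=1'=F$ coming from \eqref{equ-9} together with Theorem~\ref{the-2-1}; and the M\"obius form $\Lambda_f=\mu\ast F$ of Theorem~\ref{theo-2-2}. The whole argument is then a two-move manipulation: differentiate a known identity, then convolve with $\mu$.

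First I would differentiate the identity $1\ast\Lambda_f=1'$. Applying the product rule to the left side and writing $1''=(1')'$ on the right gives
\[
1'\ast\Lambda_f+1\ast\Lambda_f'=1''.
\]
Since $1'(n)=f(n)/h_f(n)=F(n)$, applying the derivative once more yields $1''(n)=1'(n)F(n)=\big(f(n)/h_f(n)\big)^2$, that is $1''=F^2$ as a pointwise square. Substituting $1'=F$ on the left, the relation becomes $F\ast\Lambda_f+1\ast\Lambda_f'=F^2$.

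Next I would convolve this equation with $\mu$. Using associativity and commutativity of $\ast$ together with $\mu\ast F=\Lambda_f$ (Theorem~\ref{theo-2-2}) and the basic identity $\mu\ast 1=e$, the two terms on the left collapse to
\[
(\mu\ast F)\ast\Lambda_f+(\mu\ast 1)\ast\Lambda_f'=\Lambda_f\ast\Lambda_f+\Lambda_f',
\]
while the right side becomes $\mu\ast F^2$. Unwinding the shorthand $\Lambda_f'(n)=\Lambda_f(n)f(n)/h_f(n)=\dfrac{f\Lambda_f}{h_f}(n)$ and $F^2=(f/h_f)^2$ then recovers exactly
\[
\frac{f\Lambda_f}{h_f}+\Lambda_f\ast\Lambda_f=\mu\ast\Big(\frac{f}{h_f}\Big)^2,
\]
which is the claimed identity; the restriction $n>1$ is harmless, since both sides also vanish at $n=1$ because $\Lambda_f(1)=f(1)=0$.

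The content here is essentially bookkeeping, so I do not expect a genuine obstacle. The one place to be careful is the computation $1''=(f/h_f)^2$: because the derivative is defined by \emph{pointwise} multiplication by $F$ rather than by a convolution, differentiating twice squares $F$ pointwise and must not be mistaken for a Dirichlet product. Once the product rule and the two representations $1\ast\Lambda_f=F$ and $\Lambda_f=\mu\ast F$ are invoked, every remaining step is forced.
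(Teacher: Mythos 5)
Your proof is correct and takes essentially the same route as the paper: differentiate the identity $1'=1\ast\Lambda_f$ using the product rule, then convolve with $\mu=1^{-1}$. The only difference is that you make explicit the final bookkeeping step $1''=\left(f/h_f\right)^2$ (pointwise), which the paper's proof leaves implicit when it stops at $\mu\ast 1''=\Lambda_f'+\Lambda_f\ast\Lambda_f$.
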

\begin{proof}
	Equation (\ref{equ-9}) states that $1'=1\ast\Lambda_f$. Differentiation of this equation gives us 
	$$
	1''=1'\ast\Lambda_f+1\ast\Lambda_f'
	$$
	Since $1'=1\ast\Lambda_f $ we have :
	$$
	1''=\left( 1\ast\Lambda_f\right) \ast\Lambda_f+1\ast\Lambda_f'
	$$
	Now we multiply both sides by $\mu=1^{-1}$ to obtain :
	$$
	\mu\ast 1''=\Lambda_f' +\Lambda_f\ast\Lambda_f
	$$
	This is the required identity. 
\end{proof}
\medskip

\subsection{Results : completely additive function}
	As we knows the arithmetic function $f$ completely additive is L-additive function with $h_{f}(n)=1(n)=1$ for every integer not null, then we have the von Mangoldt function  related to The function $f$ defined by :
	\begin{equation}\label{equ-21}
		\Lambda_f (n)={\begin{cases}f(p)&{\text{if }}n=p^{k}{\text{ for some prime }}p{\text{ and integer }}k\geq 1,\\0&{\text{otherwise.}}\end{cases}}
	\end{equation}
Substituting $h_f(n)=1$ into all results in previous section to find that :	
	\begin{corollary}\label{cor-2-2}
		Let $f$ an arithmetic function completely additive, then : 
		$$
		f(n)=\sum_{\substack{d|n}} \Lambda_f(d)
		$$
		That mean by using dirichlet convolution : $f=1\ast \Lambda_f$
	\end{corollary}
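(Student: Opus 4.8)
The plan is to obtain this as an immediate specialization of Theorem \ref{the-2-1}. First I would invoke the fact, already emphasized in the text preceding the corollary, that every completely additive function $f$ is L-additive with associated completely multiplicative function $h_f(n)=1(n)=1$ for all $n\geq 1$. In particular $h_f$ is nonzero-valued, so the hypotheses of Theorem \ref{the-2-1} are met with this choice of $h_f$, and the associated von Mangoldt function is the specialized one given in \eqref{equ-21}, namely $\Lambda_f(p^k)=f(p)$ for prime powers and $0$ otherwise.

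Next I would substitute $h_f\equiv 1$ into the conclusion $f(n)=h_f(n)\sum_{d\mid n}\Lambda_f(d)$ of that theorem. Since the prefactor $h_f(n)$ equals $1$, it disappears and we are left with $f(n)=\sum_{d\mid n}\Lambda_f(d)$. At the level of Dirichlet convolution, the identity $f=h_f\ast h_f\Lambda_f$ of Theorem \ref{the-2-1} collapses to $f=1\ast\Lambda_f$, because $h_f=1$ makes both factors equal to $1$ and $\Lambda_f$ respectively. This is exactly the asserted statement.

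There is essentially no obstacle here: the only thing to verify is that $h_f$ is genuinely nonzero-valued so that Theorem \ref{the-2-1} is applicable, which is automatic since $h_f\equiv 1$. As a self-contained sanity check that avoids citing Theorem \ref{the-2-1}, I would also verify the identity directly from the prime factorization: writing $n=\prod_{i=1}^{s}p_i^{\alpha_i}$, one computes $\sum_{d\mid n}\Lambda_f(d)=\sum_{i=1}^{s}\sum_{k=1}^{\alpha_i}\Lambda_f(p_i^k)=\sum_{i=1}^{s}\alpha_i f(p_i)$, and this agrees with $f(n)=\sum_{i=1}^{s}\alpha_i f(p_i)$ from the complete additivity of $f$ recorded earlier in the excerpt. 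The two routes coincide, confirming the corollary.
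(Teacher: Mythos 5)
Your proposal is correct and follows essentially the same route as the paper, which obtains Corollary \ref{cor-2-2} precisely by substituting $h_f(n)=1$ (valid since every completely additive function is L-additive with $h_f=1$, which is nonzero-valued) into Theorem \ref{the-2-1}. Your supplementary direct check via the prime factorization is also sound --- it simply re-runs the proof of Theorem \ref{the-2-1} in the special case $h_f\equiv 1$, using $\Lambda_f(p^k)=f(p)$ and $f(n)=\sum_{i=1}^{s}\alpha_i f(p_i)$ --- so nothing further is needed.
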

	\begin{corollary}\label{cor-2-3}
		Let $f$ an arithmetic function completely additive, then we have: 
		$$
		{\displaystyle \Lambda_f (n)
			=\sum _{d\mid n}\mu \left(\frac{n}{d} \right)f\left(d \right)
			=-\sum_{\substack{d|n}}\mu(d)f(d)}
		$$
		That is we have : $$\Lambda_f=\mu\ast f=-1\ast \mu f$$
	\end{corollary}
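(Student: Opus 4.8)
The plan is to obtain this corollary as an immediate specialization of Theorem~\ref{theo-2-2}. Since every completely additive function $f$ is L-additive with $h_f(n)=1$ for all $n$, the hypotheses of Theorem~\ref{theo-2-2} are satisfied and $h_f$ is trivially nonzero-valued. Substituting $h_f(d)=1$ into both expressions
$$
\Lambda_f(n)=\sum_{d\mid n}\frac{\mu(n/d)f(d)}{h_f(d)}=-\sum_{d\mid n}\frac{\mu(d)f(d)}{h_f(d)}
$$
collapses every denominator to $1$ and yields the two claimed forms directly. Nothing further is needed for the convolution statement $\Lambda_f=\mu\ast f=-1\ast\mu f$, since these are merely the restatements of the two sums as Dirichlet products.

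If instead I wanted a self-contained argument, I would start from Corollary~\ref{cor-2-2}, which gives $f=1\ast\Lambda_f$. Applying M\"obius inversion produces $\Lambda_f=\mu\ast f$, which is the first form $\sum_{d\mid n}\mu(n/d)f(d)$. To pass to the second form I would reindex and invoke complete additivity: writing $f(n/d)=f(n)-f(d)$ (valid since $d\mid n$, so $n/d\in\mathbb{N}$), one gets
$$
\sum_{d\mid n}\mu(d)f(n/d)=f(n)\sum_{d\mid n}\mu(d)-\sum_{d\mid n}\mu(d)f(d).
$$
For $n>1$ the factor $\sum_{d\mid n}\mu(d)=e(n)=0$ annihilates the first term, leaving $\Lambda_f(n)=-\sum_{d\mid n}\mu(d)f(d)$, which is exactly the second form.

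There is essentially no obstacle here: the entire content of the corollary is the substitution $h_f\equiv 1$ in an already-proved theorem. The only point meriting even a line of justification is the passage between the two displayed forms, and this rests on precisely the two facts used in the proof of Theorem~\ref{theo-2-2}---complete additivity of $f$, which is the case $h_f=1$ of Theorem~\ref{the-1-1}, and the vanishing of $\sum_{d\mid n}\mu(d)$ for $n>1$. Accordingly I would present the one-line specialization as the proof proper, and mention the self-contained derivation only as an aside.
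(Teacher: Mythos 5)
Your proposal is correct and matches the paper exactly: the paper offers no separate proof for this corollary, deriving it (together with Corollaries \ref{cor-2-2}--\ref{cor-2-9}) by the single remark ``Substituting $h_f(n)=1$ into all results in previous section,'' which is precisely your one-line specialization of Theorem \ref{theo-2-2}. Your optional self-contained derivation is also sound, but it simply replays the paper's proof of Theorem \ref{theo-2-2} in the case $h_f\equiv 1$, so nothing genuinely different is involved.
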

\begin{corollary}\label{cor-2-4}
	Let $g$ an arithmetic function , if $g$ is completely additive then we have :
	\begin{equation}
		\left(1\ast g\Lambda_f \right) (n)=\frac{1}{2}\sum \limits_{p^{\alpha}||n} \alpha(\alpha+1)f(p)g(p)
	\end{equation}
\end{corollary}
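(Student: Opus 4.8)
The plan is to obtain this corollary as the specialization of Theorem~\ref{theo-2-3} to the case $h_f\equiv 1$. Recall from the discussion following Theorem~\ref{the-1-1} that every completely additive function $f$ is itself L-additive with associated completely multiplicative function $h_f(n)=1$; in particular $h_f$ is nonzero-valued, so Theorem~\ref{theo-2-3} applies verbatim to such an $f$. That theorem gives
$$
\left(1\ast g\Lambda_f\right)(n)=\frac{1}{2}\sum_{p^{\alpha}\|n}\frac{\alpha(\alpha+1)f(p)g(p)}{h_f(p)}
$$
for any completely additive $g$, and substituting $h_f(p)=1$ into each summand immediately yields the claimed identity.

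If one prefers a self-contained derivation rather than an appeal to the general theorem, I would instead expand the Dirichlet convolution directly, mirroring the computation in the proof of Theorem~\ref{theo-2-3}. First I would write $\left(1\ast g\Lambda_f\right)(n)=\sum_{d\mid n} g(d)\Lambda_f(d)$ and observe that $\Lambda_f$ is supported only on prime powers, so the sum collapses to $\sum_{p^{\alpha}\|n}\sum_{i=1}^{\alpha} g(p^i)\Lambda_f(p^i)$. Next I would insert the two ingredients specific to the completely additive setting: the defining formula \eqref{equ-21}, which gives $\Lambda_f(p^i)=f(p)$, and complete additivity of $g$, which gives $g(p^i)=i\,g(p)$. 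The inner sum then becomes $\sum_{i=1}^{\alpha} i\,f(p)g(p)=f(p)g(p)\cdot\frac{\alpha(\alpha+1)}{2}$, and summing over the prime-power components of $n$ produces the stated result.

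There is essentially no obstacle here: the content is entirely the substitution $h_f(p)=1$ into an already-established identity, and the only point that genuinely requires checking is that complete additivity of $g$ converts $g(p^i)$ into $i\,g(p)$ so that the arithmetic series $\sum_{i=1}^{\alpha} i=\alpha(\alpha+1)/2$ appears. Both the reference route and the direct route are routine, so I would present the one-line specialization from Theorem~\ref{theo-2-3} as the main proof and merely remark that it can equally be read off from the direct expansion.
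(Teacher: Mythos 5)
Your proof is correct and takes essentially the same route as the paper: the paper derives Corollary~\ref{cor-2-4} precisely by substituting $h_f(n)=1$ into Theorem~\ref{theo-2-3}, which is your primary argument. Your alternative self-contained expansion merely replays the proof of Theorem~\ref{theo-2-3} in this special case, so it adds nothing that needs separate checking.
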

\begin{corollary}\label{cor-2-5}
	For $n>1$ we have :
	\begin{equation}
		\left(\Lambda_f\ast g \right) (n)=f(n)g(n)-\left(1\ast g\Lambda_f \right) (n)
	\end{equation}
\end{corollary}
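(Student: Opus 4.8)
The plan is to obtain this identity as a direct specialization of Theorem~\ref{theo-2-4}. The key observation, already recorded in the discussion preceding this subsection, is that every completely additive function is L-additive with associated completely multiplicative companion the constant function $h_f(n)=1$. Since this $h_f$ is nowhere zero, the hypotheses of Theorem~\ref{theo-2-4} are met whenever both $f$ and $g$ are completely additive: here $f$ supplies the generalized von Mangoldt function $\Lambda_f$ and $g$ is the auxiliary completely additive function appearing in the convolution.

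First I would invoke Theorem~\ref{theo-2-4}, which asserts for $n>1$ that
$$
\left(\Lambda_f\ast g \right)(n)=\frac{f(n)g(n)}{h_f(n)}-\left(1\ast g\Lambda_f \right)(n).
$$
Next I would substitute $h_f(n)=1$ on the right-hand side. The only term affected is the first one: the quotient $\frac{f(n)g(n)}{h_f(n)}$ collapses to $f(n)g(n)$, while the convolution term $\left(1\ast g\Lambda_f \right)(n)$ is untouched. This immediately yields the claimed identity.

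Alternatively, should one prefer a self-contained argument, the proof of Theorem~\ref{theo-2-4} can simply be re-run with $h_f\equiv 1$: using the completely additive relation $g\!\left(\frac{n}{d}\right)=g(n)-g(d)$ one splits $\left(\Lambda_f\ast g\right)(n)$ into $g(n)\sum_{d\mid n}\Lambda_f(d)$ minus $\sum_{d\mid n}\Lambda_f(d)g(d)$, then identifies the first sum with $f(n)$ via Corollary~\ref{cor-2-2} (the $h_f=1$ form of Theorem~\ref{the-2-1}) and the second with $\left(1\ast g\Lambda_f\right)(n)$ by definition of the Dirichlet convolution.

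I do not anticipate any genuine obstacle: the statement is a corollary in the literal sense, and its entire content lies in checking that the specialization $h_f\equiv 1$ is legitimate, which follows from the fact that completely additive functions are L-additive with constant companion function. The only point demanding minor care is ensuring that \emph{both} $f$ and $g$ are assumed completely additive, so that Theorem~\ref{theo-2-4} applies verbatim rather than in some mixed regime.
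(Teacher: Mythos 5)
Your proposal is correct and matches the paper's own justification exactly: the paper proves Corollary~\ref{cor-2-5} (and its neighbors) solely by the blanket remark that a completely additive $f$ is L-additive with $h_f(n)=1$, so one substitutes $h_f\equiv 1$ into Theorem~\ref{theo-2-4}. Your additional observations --- that $h_f\equiv 1$ is nowhere zero and that $g$ must still be completely additive for Theorem~\ref{theo-2-4} to apply --- are exactly the right points of care, and your optional self-contained rerun of the Theorem~\ref{theo-2-4} argument is a faithful specialization rather than a different route.
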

\begin{corollary}\label{cor-2-6}
	if $f$ and $g$ is two arithmetic function completely additive then we have :
	\begin{equation}
		\left(\Lambda_f\ast\Lambda_g \right) (n)={\begin{cases}(\alpha-1)f(p)g(p)&{\text{if }}n=p^{\alpha}{\text{ for some prime }}p{\text{ and integer }}\alpha\geq 1,\\
				f(p)g(q)+f(q)g(p)
				&{\text{if }}n=p^{\alpha}q^{\beta}{\text{ for some prime }}p,q{\text{ and integer }}\alpha,\beta\geq 1
				\\0&{\text{otherwise.}}\end{cases}}
	\end{equation}
\end{corollary}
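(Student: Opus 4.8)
The plan is to obtain this corollary as the $h_f=h_g\equiv 1$ specialization of Theorem \ref{theo-2-5}. The excerpt already records that every completely additive function is L-additive with associated completely multiplicative function identically equal to $1$; thus if $f$ and $g$ are completely additive, then $f$ and $g$ are L-additive with $h_f(n)=h_g(n)=1(n)=1$ for all $n\ge 1$. In particular both $h_f$ and $h_g$ are nonzero-valued, so the hypotheses of Theorem \ref{theo-2-5} are met and that result applies verbatim to the pair $(f,g)$.

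With the hypotheses verified, the remaining task is purely computational: substitute $h_f(p)=h_g(p)=1$ into each of the three branches of the piecewise formula of Theorem \ref{theo-2-5}. In the prime-power case $n=p^{\alpha}$ the factor $\tfrac{1}{h_f(p)h_g(p)}$ becomes $1$, leaving $(\alpha-1)f(p)g(p)$; in the two-prime case $n=p^{\alpha}q^{\beta}$ the two summands $\tfrac{f(p)g(q)}{h_f(p)h_g(q)}$ and $\tfrac{f(q)g(p)}{h_f(q)h_g(p)}$ collapse to $f(p)g(q)$ and $f(q)g(p)$ respectively; and the ``otherwise'' case (where $\omega(n)>2$) remains $0$, since the vanishing argument in Theorem \ref{theo-2-5} relied only on $h_f,h_g$ being nonzero-valued and not on any finer property. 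Collecting the three branches yields exactly the claimed piecewise expression.

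Since the whole statement is a specialization already justified by Theorem \ref{theo-2-5}, there is no genuine obstacle here; the only point demanding minor care is the bookkeeping of subscripts in the two-prime case, namely keeping $h_g(q)$ paired with $g(q)$ and $h_g(p)$ with $g(p)$ so that the correct factors are set to $1$. As an alternative, one could reprove the statement directly by expanding $(\Lambda_f\ast\Lambda_g)(n)=\sum_{d\mid n}\Lambda_f(d)\Lambda_g(n/d)$ and using $\Lambda_f(p^k)=f(p)$ and $\Lambda_g(p^k)=g(p)$ for completely additive $f,g$, which reproduces the same case analysis as in Theorem \ref{theo-2-5} with the denominators absent from the outset; but invoking the earlier theorem is the shortest route.
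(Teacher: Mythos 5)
Your proposal is correct and matches the paper exactly: the paper offers no separate proof for Corollary \ref{cor-2-6}, deriving it (together with its neighbors) by the blanket remark that one substitutes $h_f(n)=1$ into the results of the preceding section, which is precisely your specialization of Theorem \ref{theo-2-5}. Your additional checks (that $h_f=h_g=1$ is nonzero-valued, and that the vanishing in the $\omega(n)>2$ case is unaffected by the substitution) are sound and only make the substitution argument more careful than the paper's.
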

\begin{corollary}[Ennaoui-Selberg identity.]\label{cor-2-7}
	For $n>1$ we have:
	\begin{equation}
		\Lambda_f(n)f(n)+\sum \limits_{d|n} \Lambda_f(d)\Lambda_f\left(\frac{n}{d} \right) =\sum \limits_{d|n}\mu\left(\frac{n}{d} \right) f^2\left(d \right)
	\end{equation}
	by using Dirichlet product we have that :
	\begin{equation}
		f\Lambda_f+\Lambda_f\ast\Lambda_f=\mu\ast f^2
	\end{equation}
\end{corollary}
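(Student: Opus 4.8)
The plan is to obtain this identity as the $h_f\equiv 1$ specialization of Theorem~\ref{theo-2-8}, exactly as the other corollaries in this subsection are derived. First I would recall that a completely additive function $f$ is L-additive with $h_f(n)=1$ for every $n\geq 1$, as noted after the definition of L-additivity; in particular $h_f$ is nonzero-valued, so Theorem~\ref{theo-2-8} applies to $f$. Writing that theorem in its Dirichlet form
$$\frac{f\Lambda_f}{h_f}+\Lambda_f\ast\Lambda_f=\mu\ast\left(\frac{f}{h_f}\right)^2,$$
I would then substitute $h_f(n)=1$ throughout. The first term collapses from $f\Lambda_f/h_f$ to $f\Lambda_f$, the middle term is unchanged, and on the right $(f/h_f)^2$ becomes $f^2$, giving $f\Lambda_f+\Lambda_f\ast\Lambda_f=\mu\ast f^2$. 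Unfolding the convolutions pointwise over the divisors of $n$ then yields the stated divisor-sum form.

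As a self-contained alternative I would re-run the derivative argument behind Theorem~\ref{theo-2-8} with the specialized derivative $g'(n)=g(n)f(n)$. Here Corollary~\ref{cor-2-2} plays the role of the relation $1'=1\ast\Lambda_f$, and differentiating twice and convolving with $\mu=1^{-1}$ produces $\mu\ast 1''=\Lambda_f'+\Lambda_f\ast\Lambda_f$. The one point that needs care is the evaluation of the second derivative: since $1'(n)=f(n)$ when $h_f\equiv 1$, one has $1''(n)=1'(n)f(n)=f(n)^2$, while $\Lambda_f'(n)=\Lambda_f(n)f(n)$, and this recovers $\mu\ast f^2=f\Lambda_f+\Lambda_f\ast\Lambda_f$.

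There is no genuine obstacle here beyond bookkeeping: the only things to verify are that the substitution $h_f\equiv 1$ is legitimate and applied consistently in every term, and that the second-derivative computation $1''=f^2$ is correct. I expect the specialization route to be the cleaner presentation, reserving the derivative re-derivation as an illustration that the mechanism driving the general Ennaoui--Selberg identity survives intact in the completely additive case.
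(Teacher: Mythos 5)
Your proposal matches the paper exactly: the paper offers no separate proof of Corollary~\ref{cor-2-7}, deriving it (together with the other corollaries of that subsection) precisely by substituting $h_f(n)=1$ into the general results, here Theorem~\ref{theo-2-8}, and your bookkeeping of that substitution is correct. Your optional re-run of the derivative argument with $g'(n)=g(n)f(n)$, giving $1''=f^2$ and $\Lambda_f'=f\Lambda_f$, is also sound, but it is extra material beyond what the paper does.
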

\begin{corollary}\label{cor-2-8}
	Let $f$ an arithmetic function. If $f$ is completely additive, then :
	\begin{eqnarray}
		(\tau\ast \Lambda_f)(n) =\frac{1}{2}f(n)\tau(n) 
	\end{eqnarray}
\end{corollary}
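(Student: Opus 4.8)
The plan is to obtain this statement as the $h_f \equiv 1$ specialization of Corollary \ref{cor-2-1}, since a completely additive function is precisely an L-additive function whose associated completely multiplicative function is $h_f(n) = 1$. Under that substitution the denominator $2h_f(n)$ in Corollary \ref{cor-2-1} collapses to $2$, so the identity $(\tau \ast \Lambda_f)(n) = \frac{f(n)\tau(n)}{2h_f(n)}$ becomes exactly $(\tau \ast \Lambda_f)(n) = \frac{1}{2} f(n)\tau(n)$. This is a one-line deduction once the general corollary is in hand, and no genuine difficulty arises.

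For a self-contained route that avoids invoking the general $h_f$ result, I would argue directly at the level of Dirichlet convolution. First I would write $\tau = 1 \ast 1$ and recall from Corollary \ref{cor-2-2} that for completely additive $f$ one has $f = 1 \ast \Lambda_f$. Associativity of the Dirichlet product then yields
$$\tau \ast \Lambda_f = (1 \ast 1) \ast \Lambda_f = 1 \ast (1 \ast \Lambda_f) = 1 \ast f,$$
so the problem reduces to evaluating the divisor sum $(1 \ast f)(n) = \sum_{d \mid n} f(d)$.

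To finish I would either quote Corollary \ref{cor-1-1} with $h_f = 1$, which reads $(f \ast 1)(n) = \frac{1}{2} f(n)\tau(n)$ and is literally the claim, or evaluate the divisor sum by hand. For the hands-on computation I would set $n = \prod_i p_i^{\alpha_i}$ and use complete additivity to expand $f(d) = \sum_i \beta_i f(p_i)$ for $d = \prod_i p_i^{\beta_i}$; interchanging the two sums, the coefficient of each $f(p_i)$ is $\sum_{d \mid n} \beta_i$, and a counting argument shows that each exponent value $0, 1, \dots, \alpha_i$ occurs for exactly $\tau(n)/(\alpha_i+1)$ divisors, whence $\sum_{d \mid n} \beta_i = \frac{\tau(n)}{\alpha_i+1}\cdot\frac{\alpha_i(\alpha_i+1)}{2} = \frac{\alpha_i \tau(n)}{2}$. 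Summing over $i$ and using $f(n) = \sum_i \alpha_i f(p_i)$ recovers $\frac{1}{2} f(n)\tau(n)$.

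The only point requiring any care — and it is minor — is the bookkeeping in that counting step, namely verifying that each admissible exponent of $p_i$ occurs with the stated multiplicity among the divisors of $n$; everything else is formal manipulation of Dirichlet products. Because the general statement in Corollary \ref{cor-2-1} already packages this content, I expect the substitution route to be the intended proof, with the direct divisor-sum computation serving only as an independent check.
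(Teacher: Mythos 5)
Your proposal is correct and takes the paper's own route: the paper obtains Corollary \ref{cor-2-8} precisely by substituting $h_f(n)=1$ into Corollary \ref{cor-2-1}, exactly as in your first paragraph. Your alternative derivation via $\tau\ast\Lambda_f=(1\ast 1)\ast\Lambda_f=1\ast f$ together with the divisor-exponent counting argument is also sound, but it serves only as an independent check of the same identity.
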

\begin{corollary}\label{cor-2-9}
	Let $s$ a number complex such that $Re(s)>0$ , then we have :
	$$
	\sum \limits_{n\geq 1} \frac{\Lambda_{f}(n)}{n^s}=
	\sum \limits_{p}\frac{f(p)}{p^{s}-1}
	$$	
\end{corollary}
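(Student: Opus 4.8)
The plan is to obtain this identity as the $h_f \equiv 1$ specialization of Theorem \ref{theo-2-6}. Since every completely additive function is L-additive with $h_f(n) = 1$ for all $n \geq 1$ (as recalled just after the definition of L-additivity), the hypothesis that $h_f$ be nonzero-valued is automatically satisfied, and Theorem \ref{theo-2-6} applies verbatim. Substituting $h_f(p) = 1$ into
$$
\sum_{n \geq 1} \frac{\Lambda_f(n)}{n^s} = \sum_p \frac{f(p)}{h_f(p) p^s - h_f(p)}
$$
collapses the denominator $h_f(p)p^s - h_f(p)$ to $p^s - 1$ and yields the claim at once.

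If one prefers a self-contained derivation, I would instead argue directly from the explicit form \eqref{equ-21} of $\Lambda_f$, which for completely additive $f$ gives $\Lambda_f(p^k) = f(p)$ for every prime power $p^k$ with $k \geq 1$ and $\Lambda_f(n) = 0$ otherwise. Grouping the nonzero terms $n = p^k$ by their underlying prime $p$, the series factors as
$$
\sum_{n \geq 1} \frac{\Lambda_f(n)}{n^s} = \sum_p f(p) \sum_{k \geq 1} \frac{1}{p^{ks}} = \sum_p f(p) \cdot \frac{p^{-s}}{1 - p^{-s}},
$$
and simplifying the geometric series $\sum_{k \geq 1} (p^{-s})^k$ reproduces the factor $(p^s - 1)^{-1}$, exactly mirroring the computation in the proof of Theorem \ref{theo-2-6}.

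The only genuine point requiring care --- the step I would treat as the main obstacle --- is the analytic justification of the rearrangement that regroups the single sum over $n$ into a sum over primes $p$ of geometric series in $k$. This is legitimate once the double series converges absolutely, which in turn forces a suitable half-plane condition on $\mathrm{Re}(s)$ together with a growth bound on the values $f(p)$; for instance, if $f(p) = O(p^{\varepsilon})$ then $\mathrm{Re}(s) > 1 + \varepsilon$ suffices, since then $\sum_p |f(p)| \sum_{k \geq 1} p^{-k\,\mathrm{Re}(s)}$ is dominated by a convergent sum. Under such a hypothesis the inner geometric series converges with $|p^{-s}| < 1$, the Fubini--Tonelli theorem permits the interchange of summations, and the stated identity holds as an equality of absolutely convergent series.
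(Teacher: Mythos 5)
Your first argument is precisely the paper's own proof: the corollary appears in the subsection introduced by ``Substituting $h_f(n)=1$ into all results in previous section,'' so the paper, exactly like you, obtains it by specializing Theorem \ref{theo-2-6} to the case $h_f\equiv 1$, with no separate argument given. Your closing caveat about absolute convergence is a sound observation the paper omits---its hypothesis $\mathrm{Re}(s)>0$ is indeed insufficient without a growth bound on the values $f(p)$---but this refinement does not alter the fact that your route and the paper's coincide.
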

\section{Application : classical arithmetics function completely additive}
\subsection{The function $\Omega$ of the number of prime factors of n counting multiplicity .}

We know the $\Omega$ function  of the number of prime factors of n counting multiplicity is L-additive function with $h_{\Omega}(n)=1$ then The von mangoldt function of $\Omega$ is defined by :
$$
\Lambda_{\Omega} (n)={\begin{cases} 1 &{\text{if }}n=p^{k}{\text{ for some prime }}p{\text{ and integer }}k\geq 1,\\0&{\text{otherwise.}}\end{cases}}
$$
then by the corollary (\ref{cor-2-2}) and (\ref{cor-2-2}) we have :
\begin{corollary}
	for $n>1$  we have :
	\begin{equation}
		\Omega(n)=\left( 1\ast \Lambda_{\Omega}\right)(n)
	\end{equation}
	\begin{equation}
		\Lambda_{\Omega}(n)=\left( \mu\ast \Omega\right) (n)=-\left( 1\ast \Omega\mu\right)(n) 
	\end{equation}
\end{corollary}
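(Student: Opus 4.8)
The plan is to recognize both displayed identities as direct specializations of Corollaries \ref{cor-2-2} and \ref{cor-2-3} to the single case $f=\Omega$. Since those corollaries already deliver the general statements $f=1\ast\Lambda_f$ and $\Lambda_f=\mu\ast f=-1\ast\mu f$ for every completely additive $f$, the entire task reduces to two pieces of bookkeeping: verifying that $\Omega$ satisfies the hypotheses (it is completely additive, hence L-additive with $h_\Omega$ nonzero-valued), and checking that the general von Mangoldt function $\Lambda_f$ built from $f=\Omega$ is exactly the $\Lambda_\Omega$ displayed in \eqref{equ-21}.

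First I would record that $\Omega$ is completely additive. If $m=\prod_i p_i^{a_i}$ and $n=\prod_i p_i^{b_i}$ are arbitrary positive integers, not necessarily coprime, then $mn=\prod_i p_i^{a_i+b_i}$, so $\Omega(mn)=\sum_i(a_i+b_i)=\Omega(m)+\Omega(n)$; thus $\Omega$ obeys the completely additive law for every pair of integers. By the remark preceding Theorem \ref{the-1-1}, $\Omega$ is therefore L-additive with $h_\Omega(n)=1$, and $h_\Omega$ is plainly nonzero-valued. With $h_\Omega\equiv 1$, the defining equation \eqref{def-mangol} collapses: for $n=p^k$ we get $\Lambda_\Omega(p^k)=\Omega(p)/h_\Omega(p)=\Omega(p)=1$, and $\Lambda_\Omega(n)=0$ otherwise, which is precisely the function recorded in \eqref{equ-21} for $f=\Omega$.

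Next I would simply invoke the two corollaries. Applying Corollary \ref{cor-2-2} with $f=\Omega$ yields $\Omega=1\ast\Lambda_\Omega$, that is $\Omega(n)=\sum_{d\mid n}\Lambda_\Omega(d)$, which is the first equation. Applying Corollary \ref{cor-2-3} with $f=\Omega$ yields $\Lambda_\Omega=\mu\ast\Omega=-1\ast\mu\Omega$, that is $\Lambda_\Omega(n)=\sum_{d\mid n}\mu(n/d)\Omega(d)=-\sum_{d\mid n}\mu(d)\Omega(d)$, which is the second pair of equations.

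There is no genuine analytic obstacle here; the content is entirely the specialization step. The only point that warrants any care is confirming that $\Omega$ is \emph{completely} additive rather than merely additive, since Corollaries \ref{cor-2-2} and \ref{cor-2-3} rest on that stronger hypothesis, and this is exactly what the exponent-addition computation above secures. If a self-contained argument were preferred, one could instead establish the first identity directly from the telescoping computation $\sum_{d\mid n}\Lambda_\Omega(d)=\sum_{p^{\alpha}\parallel n}\alpha=\Omega(n)$ (the specialization of the chain in the proof of Theorem \ref{the-2-1}), and then derive the second identity from it by Möbius inversion; but the cleanest route is simply to quote the already-proved corollaries.
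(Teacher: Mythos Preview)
Your proposal is correct and follows essentially the same approach as the paper: the paper simply notes that $\Omega$ is completely additive (hence L-additive with $h_\Omega=1$) and then cites Corollaries~\ref{cor-2-2} and~\ref{cor-2-3} to obtain both identities. Your added verification that $\Omega$ is completely additive and that the general $\Lambda_f$ specializes to the displayed $\Lambda_\Omega$ is more detail than the paper provides, but the route is identical.
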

Substituting $f(n)=\Omega(n)$ into the corollary (\ref{cor-2-8}) to conclude that :
\begin{corollary}\label{cor-3-2}
	For every integer $n>1$ we have
	\begin{equation}
		\left(\Lambda_{\Omega}\ast\tau \right)(n)=\frac{1}{2}\tau(n)\Omega(n)
	\end{equation}
\end{corollary}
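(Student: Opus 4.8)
The plan is to treat this statement as a direct specialization of Corollary \ref{cor-2-8}, so that the real work reduces to checking that $\Omega$ satisfies the required hypothesis. First I would verify that $\Omega$ is completely additive: writing $m=\prod_p p^{a_p}$ and $n=\prod_p p^{b_p}$, the exponent of each prime $p$ in $mn$ is $a_p+b_p$, so $\Omega(mn)=\sum_p (a_p+b_p)=\Omega(m)+\Omega(n)$ for \emph{all} $m,n\geq 1$, not merely for coprime pairs. Hence $\Omega$ is completely additive, and by the remarks opening Section 2.2 it is L-additive with $h_\Omega\equiv 1$, so the associated $\Lambda_\Omega$ is exactly the function displayed just before the corollary.

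Next I would apply Corollary \ref{cor-2-8} with $f=\Omega$. That corollary asserts $(\tau\ast\Lambda_\Omega)(n)=\tfrac12\,\Omega(n)\tau(n)$ for a completely additive $f$, which is precisely the right-hand side wanted here. Finally, since Dirichlet convolution is commutative, $\Lambda_\Omega\ast\tau=\tau\ast\Lambda_\Omega$, and the claimed identity follows at once.

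I expect no genuine obstacle: the only points to confirm are the complete additivity of $\Omega$ and the commutativity of $\ast$, both immediate. If one prefers an argument that does not quote Corollary \ref{cor-2-8} directly, I would instead combine $\Omega=1\ast\Lambda_\Omega$ (Corollary \ref{cor-2-2}) with $\tau=1\ast 1$ to obtain $\tau\ast\Lambda_\Omega=1\ast(1\ast\Lambda_\Omega)=1\ast\Omega$, and then invoke Corollary \ref{cor-1-1} in the form $(\Omega\ast 1)(n)=\tfrac12\,\Omega(n)\tau(n)$. This reproduces the same conclusion and, moreover, makes transparent why the factor $\tfrac12$ appears, namely as the average of the $\Omega$-value of a divisor and its complementary divisor summed over the symmetric divisor pairing of $n$.
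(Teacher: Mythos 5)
Your proposal is correct and takes essentially the same route as the paper, which likewise obtains the identity by substituting $f=\Omega$ into Corollary \ref{cor-2-8}; you merely add the (correct) verification that $\Omega$ is completely additive, a step the paper leaves implicit. Your alternative derivation via $\tau=1\ast 1$ and Corollary \ref{cor-1-1} with $h_\Omega\equiv 1$ is also valid, but the primary argument coincides with the paper's.
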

if $f=g=\Omega$ then by corollary (\ref{cor-2-4}) we have this result :
\begin{equation}
	\left(1\ast \Omega\Lambda_{\Omega} \right) (n)=\frac{1}{2}\left(\Omega_2(n)+\Omega(n) \right) 
\end{equation}
Substituting $f(n)=\Omega(n)$ into the Ennaoui-Selberg identity (\ref{cor-2-7}) to find that :
\begin{equation}
	\Omega(n)\Lambda_\Omega(n)+\left( \Lambda_\Omega\ast\Lambda_\Omega\right) (n)=\left( \mu\ast \Omega^2\right) (n)
\end{equation}
\begin{theorem}\label{the-3-1}	for every integer $n>0$ we have :
	\begin{equation}
		\left( \Lambda_{\Omega}\ast\beta_k\right)(n)=\Omega(n)\beta_k(n)-\beta_k(n)
	\end{equation}
\end{theorem}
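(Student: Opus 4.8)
The plan is to compute the convolution directly, exploiting the fact that $\Lambda_\Omega$ is simply the indicator function of the prime powers: by \eqref{equ-21} with $f=\Omega$ we have $\Lambda_\Omega(d)=1$ when $d=p^i$ for some prime $p$ and integer $i\ge 1$, and $\Lambda_\Omega(d)=0$ otherwise. Consequently the Dirichlet convolution collapses onto the prime-power divisors of $n$:
$$
\left(\Lambda_\Omega\ast\beta_k\right)(n)=\sum_{d\mid n}\Lambda_\Omega(d)\,\beta_k\!\left(\tfrac{n}{d}\right)=\sum_{p\mid n}\sum_{i=1}^{v_p(n)}\beta_k\!\left(\tfrac{n}{p^i}\right),
$$
where $v_p(n)$ denotes the exponent of $p$ in $n$. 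I would stress at the outset that one cannot simply invoke Corollary~\ref{cor-2-5} here, since $\beta_k$ is additive but \emph{not} completely additive (indeed $\beta_k(p^2/p)=p^k\neq 0=\beta_k(p^2)-\beta_k(p)$), so the relation $g(n/d)=g(n)-g(d)$ underlying that corollary breaks down.

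The decisive step is to track how $\beta_k$ behaves under division by $p^i$. Fixing a prime $p$ with $p^\alpha\parallel n$, the set of prime divisors of $n/p^i$ coincides with that of $n$ as long as $i<\alpha$, whereas for $i=\alpha$ exactly the prime $p$ is removed. Hence $\beta_k(n/p^i)=\beta_k(n)$ for $1\le i\le\alpha-1$ and $\beta_k(n/p^\alpha)=\beta_k(n)-p^k$, so the inner sum contributes
$$
\sum_{i=1}^{\alpha}\beta_k\!\left(\tfrac{n}{p^i}\right)=(\alpha-1)\beta_k(n)+\bigl(\beta_k(n)-p^k\bigr)=\alpha\,\beta_k(n)-p^k.
$$

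Summing this over all primes $p\mid n$ and recalling that $\sum_{p\mid n}v_p(n)=\Omega(n)$ and $\sum_{p\mid n}p^k=\beta_k(n)$ would then yield
$$
\left(\Lambda_\Omega\ast\beta_k\right)(n)=\beta_k(n)\sum_{p\mid n}v_p(n)-\sum_{p\mid n}p^k=\Omega(n)\beta_k(n)-\beta_k(n),
$$
which is the claim; the boundary case $n=1$ is immediate since both sides vanish. The only genuinely delicate point is the case distinction $i<\alpha$ versus $i=\alpha$ in the second step — everything else is bookkeeping — so I would take care to argue clearly why deleting a \emph{proper} power of $p$ leaves the set of prime divisors, and hence $\beta_k$, unchanged, while deleting the full power $p^\alpha$ drops the summand $p^k$.
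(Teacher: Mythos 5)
Your proof is correct and follows essentially the same route as the paper's: collapse the convolution onto the prime-power divisors $p^i$ of $n$, observe that $\beta_k(n/p^i)=\beta_k(n)$ for $1\le i\le\alpha-1$ while $\beta_k(n/p^\alpha)=\beta_k(n)-p^k$, and sum to get $\alpha\beta_k(n)-p^k$ per prime, hence $\Omega(n)\beta_k(n)-\beta_k(n)$. Your write-up is in fact slightly more careful than the paper's, which performs the key step $\sum_{i=1}^{\alpha}\beta_k(n/p^i)=\alpha\beta_k(n)-p^k$ without spelling out the case distinction, and your remark that Corollary~\ref{cor-2-5} is unavailable because $\beta_k$ is additive but not completely additive is a correct and worthwhile caution.
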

\begin{proof}
	for any positive integer $n$ such that  $n = p_1^{\alpha _1}\ldots  p_s^{\alpha _s}$ we have :
	\begin{align*}
		\left( \Lambda_{\Omega}\ast\beta_k\right)(n)&=	\sum _{d\mid n} \Lambda_{\Omega}(d)\beta_k\left(\frac{n}{d} \right)  =\sum \limits_{p^{\alpha}||n}\sum \limits_{i=1}^{i=\alpha} \Lambda_{\Omega}(p^{i})\beta_k\left(\frac{n}{p^{i}} \right) 
		\\
		&=\sum \limits_{p^{\alpha}||n}\sum \limits_{i=1}^{i=\alpha} \beta_k\left(\frac{n}{p^{i}} \right) 
		\\
		&=\sum \limits_{p^{\alpha}||n}\left( -p^k +\sum \limits_{i=1}^{i=\alpha} \beta_k\left(n\right)\right) 
		\\
		&=\sum \limits_{p^{\alpha}||n} -p^k +\alpha\beta_k\left(n\right)
		\\
		&=\beta_k(n)\sum \limits_{p^{\alpha}||n}\alpha-\sum \limits_{p^{\alpha}||n} p^k 
		\\
		&=\beta_k(n)\Omega(n)-\beta_k(n)
	\end{align*}
\end{proof}

\subsection{the arithmetic logarithmic derivative function :}
	Now we can defined the von mangoldt function associed to arithmetic derivative $Ld$ by : 
$$
\Lambda_{Ld} (n)={\begin{cases}\frac{1}{p} &{\text{if }}n=p^{k}{\text{ for some prime }}p{\text{ and integer }}k\geq 1,\\0&{\text{otherwise.}}\end{cases}}
$$

	Substituting $f(n)=Ld(n)$ into the corollary (\ref{cor-2-2}) and (\ref{cor-2-3}) gives :
	\begin{equation}
		Ld(n)=\left( 1\ast \Lambda_{Ld}\right)(n)
	\end{equation}
	And : 
	\begin{equation}
		\Lambda_{Ld}(n)=\left( \mu\ast Ld\right) (n)=-\left( 1\ast Ld\mu\right)(n) 
	\end{equation}
Now substituting $f(n)=Ld(n)$ into the corollary (\ref{cor-2-7}) and (\ref{cor-2-8}) to get :
\begin{corollary}
	For $n>1$ we have 
	\begin{equation}
		\left(\Lambda_{Ld}\ast\tau \right)(n)=\frac{1}{2}\tau(n)Ld(n)
	\end{equation}
	\begin{equation}
		Ld(n)\Lambda_{Ld}(n)+\left(\Lambda_{Ld}\ast\Lambda_{Ld} \right) (n)=\left( \mu\ast Ld^2 \right)(n) 
	\end{equation}
\end{corollary}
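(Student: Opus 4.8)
The plan is to recognize that both displayed identities are nothing more than the specializations $f=Ld$ of two general results already proved for completely additive functions, so the only genuine content is to check that $Ld$ lies in that class; everything else is a mechanical substitution.

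First I would confirm that $Ld$ is completely additive. Writing $m=\prod_p p^{a_p}$ and $n=\prod_p p^{b_p}$, the definition $Ld(k)=\sum_{p^\alpha\| k}\alpha/p$ gives $Ld(mn)=\sum_p (a_p+b_p)/p=\sum_p a_p/p+\sum_p b_p/p=Ld(m)+Ld(n)$ for \emph{all} $m,n$, not merely coprime ones. This is exactly the fact already recorded in the introduction. Consequently $Ld$ is L-additive with $h_{Ld}\equiv 1$, which is trivially nonzero-valued, so Corollaries \ref{cor-2-7} and \ref{cor-2-8} apply with $f=Ld$. At the same time the associated von Mangoldt function specializes correctly: $\Lambda_{Ld}(p^k)=f(p)/h_f(p)=Ld(p)=1/p$, matching the displayed definition of $\Lambda_{Ld}$ stated just before the corollary.

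Next I would substitute $f=Ld$ into Corollary \ref{cor-2-8}, namely $(\tau\ast\Lambda_f)(n)=\tfrac12 f(n)\tau(n)$. Using commutativity of the Dirichlet convolution to rewrite $\tau\ast\Lambda_{Ld}$ as $\Lambda_{Ld}\ast\tau$, this yields the first equation $(\Lambda_{Ld}\ast\tau)(n)=\tfrac12\tau(n)Ld(n)$ at once. Then I would substitute $f=Ld$ into the Ennaoui--Selberg identity of Corollary \ref{cor-2-7}, $f\Lambda_f+\Lambda_f\ast\Lambda_f=\mu\ast f^2$, which gives the second equation $Ld(n)\Lambda_{Ld}(n)+(\Lambda_{Ld}\ast\Lambda_{Ld})(n)=(\mu\ast Ld^2)(n)$ directly.

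There is no real obstacle here. The substance of both statements is carried entirely by the two prior corollaries, and the one point requiring a moment's care is simply the verification that $Ld$ is completely additive so that those corollaries are applicable; once that is in hand, the two identities are immediate specializations and the proof closes.
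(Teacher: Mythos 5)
Your proposal is correct and takes essentially the same route as the paper, which likewise obtains both identities by substituting $f=Ld$ into Corollaries \ref{cor-2-7} and \ref{cor-2-8} (using $h_{Ld}(n)=1$ and $\Lambda_{Ld}(p^k)=Ld(p)=1/p$). Your explicit verification that $Ld$ is completely additive just makes precise the fact the paper already records in its introduction, so there is no substantive difference.
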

\begin{theorem}\label{the-3-2}
	Let $g$ an arithmetic function , if $g$ is completely additive then we have :
	\begin{equation}
		\left(\Lambda_{Ld}\ast g \right) (n)=g(n)Ld(n)-\frac{1}{2}\sum \limits_{p^{\alpha}||n} \frac{\alpha(\alpha+1)g(p)}{p}
	\end{equation}
\end{theorem}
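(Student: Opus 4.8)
The plan is to recognize Theorem \ref{the-3-2} as a direct specialization of the general convolution identities already proved, applied to the function $Ld$. The first step is to record the two facts that make the specialization go through. Since $Ld(p^a p^b)=Ld(p^{a+b})=\frac{a+b}{p}=Ld(p^a)+Ld(p^b)$, the arithmetic logarithmic derivative $Ld$ is completely additive; hence it is L-additive with $h_{Ld}(n)=1$ and plays the role of the base function $f$ in the earlier results. Moreover, on a prime $p=p^1$ we have the value
$$
Ld(p)=\frac{1}{p},
$$
which is the substitution that ultimately produces the factor $1/p$ in the claimed formula.

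With $Ld$ identified as a completely additive function, I would invoke Corollary \ref{cor-2-5} with $f=Ld$, which for $n>1$ reads
$$
\left(\Lambda_{Ld}\ast g\right)(n)=Ld(n)g(n)-\left(1\ast g\Lambda_{Ld}\right)(n),
$$
valid precisely because $g$ is completely additive. This reduces the whole computation to evaluating the single term $\left(1\ast g\Lambda_{Ld}\right)(n)$. For that term I would apply Corollary \ref{cor-2-4}, again with $f=Ld$, obtaining
$$
\left(1\ast g\Lambda_{Ld}\right)(n)=\frac{1}{2}\sum_{p^{\alpha}||n}\alpha(\alpha+1)\,Ld(p)\,g(p).
$$
Substituting $Ld(p)=\tfrac{1}{p}$ from the first step converts each summand into $\frac{\alpha(\alpha+1)g(p)}{p}$, and combining the two displays yields exactly
$$
\left(\Lambda_{Ld}\ast g\right)(n)=g(n)Ld(n)-\frac{1}{2}\sum_{p^{\alpha}||n}\frac{\alpha(\alpha+1)g(p)}{p},
$$
as required. (Equivalently, one could run the same argument through the unrestricted Theorems \ref{theo-2-4} and \ref{theo-2-3} with $h_{Ld}=1$; the two routes coincide.)

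The only genuine point of care here is bookkeeping rather than any analytic difficulty: I must verify that the hypotheses of the cited corollaries are met, namely that \emph{both} $Ld$ and $g$ are completely additive, and that the prime value $Ld(p)=1/p$ is inserted correctly into the sum indexed by $p^{\alpha}||n$. Once these substitutions are checked, no computation beyond assembling the two earlier identities is needed, so I expect the proof to be short and the main risk to be an arithmetic slip in tracking the value of $Ld$ on prime powers versus primes.
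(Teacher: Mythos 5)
Your proposal is correct and follows exactly the paper's own route: specialize Corollary \ref{cor-2-5} with $f=Ld$ to get $\left(\Lambda_{Ld}\ast g\right)(n)=g(n)Ld(n)-\left(1\ast g\Lambda_{Ld}\right)(n)$, evaluate the remaining term via Corollary \ref{cor-2-4}, and substitute $Ld(p)=\frac{1}{p}$. Your explicit verification that $Ld$ is completely additive is a small bit of added care the paper leaves implicit, but the argument is otherwise identical.
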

\begin{proof}
	First take $f=Ld$ in the corollary (\ref{cor-2-5}) to find that :
	$$
		\left(\Lambda_{Ld}\ast g \right) (n)=g(n)Ld(n)-\left(1\ast g\Lambda_{Ld} \right) (n)
	$$
	in the same substituting $f=Ld$ into corollary (\ref{cor-2-4}) gives :
	$$
	\left(1\ast g\Lambda_{Ld} \right) (n)=\frac{1}{2}\sum \limits_{p^{\alpha}||n} \alpha(\alpha+1)Ld(p)g(p)
	$$
	Substituting $Ld(p)=\frac{1}{p}$ completes the proof.
\end{proof}
\begin{theorem}\label{the-3-3}
	For $n>1$ and for $k\in\mathbb{Z}$ we have :
	\begin{equation}
		\left(\Lambda_{Ld}\ast\beta_k \right)(n)=Ld(n)\beta_k(n)-\beta_{k-1}(n) 
	\end{equation}
\end{theorem}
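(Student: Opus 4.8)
The plan is to carry out the computation directly, mirroring the proof of Theorem~\ref{the-3-1} but with $\Lambda_{Ld}$ in place of $\Lambda_{\Omega}$. Note that $\beta_k$ is additive but \emph{not} completely additive (for instance $\beta_k(p^2)=p^k\ne 2p^k$), so Theorem~\ref{the-3-2} does not apply and one cannot shortcut the argument; a hands-on evaluation of the convolution is needed. First I would expand the Dirichlet product and use that $\Lambda_{Ld}$ is supported only on prime powers, so that
$$
\left(\Lambda_{Ld}\ast\beta_k\right)(n)=\sum_{d\mid n}\Lambda_{Ld}(d)\,\beta_k\!\left(\tfrac{n}{d}\right)=\sum_{p^{\alpha}\|n}\sum_{i=1}^{\alpha}\Lambda_{Ld}(p^{i})\,\beta_k\!\left(\tfrac{n}{p^{i}}\right),
$$
and then substitute $\Lambda_{Ld}(p^{i})=1/p$, which pulls a factor $1/p$ out of each inner sum.

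The key combinatorial step is the evaluation of $\sum_{i=1}^{\alpha}\beta_k(n/p^{i})$, which is identical to the one appearing in Theorem~\ref{the-3-1}. For $1\le i<\alpha$ the quotient $n/p^{i}$ still has $p$ among its prime divisors, so $\beta_k(n/p^{i})=\beta_k(n)$; only when $i=\alpha$ does $p$ disappear, giving $\beta_k(n/p^{\alpha})=\beta_k(n)-p^{k}$. Hence the inner sum collapses to $\alpha\beta_k(n)-p^{k}$. Putting this back I would obtain
$$
\left(\Lambda_{Ld}\ast\beta_k\right)(n)=\sum_{p^{\alpha}\|n}\frac{1}{p}\bigl(\alpha\beta_k(n)-p^{k}\bigr)=\beta_k(n)\sum_{p^{\alpha}\|n}\frac{\alpha}{p}-\sum_{p^{\alpha}\|n}p^{k-1}.
$$
The final step is to recognize the first sum as $Ld(n)$ by its very definition, and the second as $\beta_{k-1}(n)$, since the exponents reduce as $p^{k}/p=p^{k-1}$ and the sum runs over the distinct primes dividing $n$. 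This delivers $Ld(n)\beta_k(n)-\beta_{k-1}(n)$, as claimed.

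The step needing the most care is the evaluation of $\beta_k(n/p^{i})$: one must keep in mind that $\beta_k$ depends only on the \emph{set} of prime divisors, so lowering the exponent of $p$ changes nothing until that exponent hits zero. The reduction $p^{k}/p=p^{k-1}$ is precisely what forces $\beta_{k-1}$ (rather than $\beta_k$) onto the right-hand side, and it also explains why the identity holds for every $k\in\mathbb{Z}$, including negative $k$, since $p\ne 0$ makes the manipulation legitimate throughout.
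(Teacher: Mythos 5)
Your proof is correct and follows essentially the same route as the paper's: restrict the convolution to prime-power divisors, pull out the factor $\Lambda_{Ld}(p^{i})=1/p$, evaluate $\sum_{i=1}^{\alpha}\beta_k(n/p^{i})=\alpha\beta_k(n)-p^{k}$ using the fact that $\beta_k$ sees only the set of distinct primes, and identify $Ld(n)$ and $\beta_{k-1}(n)$ in the resulting sums. Your preliminary remark that $\beta_k$ is not completely additive (so Theorem~\ref{the-3-2} cannot be invoked) is a correct observation that the paper leaves implicit, but it does not change the substance of the argument.
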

\begin{proof}
	for any positive integer $n$ such that  $n = p_1^{\alpha _1}\ldots  p_s^{\alpha _s}$ we have :
	\begin{align*}
		\left( \Lambda_{Ld}\ast\beta_k\right)(n)&=	\sum _{d\mid n} \Lambda_{Ld}(d)\beta_k\left(\frac{n}{d} \right)  =\sum \limits_{p^{\alpha}||n}\sum \limits_{i=1}^{i=\alpha} \Lambda_{Ld}(p^{i})\beta_k\left(\frac{n}{p^{i}} \right) 
		\\
		&=\sum \limits_{p^{\alpha}||n}\sum \limits_{i=1}^{i=\alpha} 
		\frac{\beta_k\left(\frac{n}{p^{i}} \right) }{p}
		\\
		&=\sum \limits_{p^{\alpha}||n}\left( -\frac{p^k}{p} +\frac{1}{p}\sum \limits_{i=1}^{i=\alpha} \beta_k\left(n\right)\right) 
		\\
		&=\sum \limits_{p^{\alpha}||n} -p^{k-1} +\frac{\alpha}{p}\beta_k\left(n\right)
		\\
		&=\beta_k(n)\sum \limits_{p^{\alpha}||n}\frac{\alpha}{p}-\sum \limits_{p^{\alpha}||n} p^{k-1} 
		\\
		&=\beta_k(n)Ld(n)-\beta_{k-1}(n)
	\end{align*}
\end{proof}

\begin{corollary}
	\begin{equation}
		\left(\Lambda_{Ld}\ast\Omega \right)(n)=\Omega(n)Ld(n)-Ld(n)-\sum \limits_{p^{\alpha}||n}\frac{\alpha^2}{p} 
	\end{equation}
\end{corollary}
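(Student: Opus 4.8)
The plan is to obtain this identity as the $g=\Omega$ instance of Theorem~\ref{the-3-2}. First I would check that $\Omega$ satisfies the hypothesis of that theorem, namely that it is completely additive: since exponents in the prime factorization add under multiplication, $\Omega(mn)=\Omega(m)+\Omega(n)$ for all positive integers $m,n$, and in particular $\Omega(p)=1$ for every prime $p$. With $\Omega$ admissible, Theorem~\ref{the-3-2} immediately supplies
$$
\left(\Lambda_{Ld}\ast g\right)(n)=g(n)Ld(n)-\frac{1}{2}\sum_{p^{\alpha}\|n}\frac{\alpha(\alpha+1)g(p)}{p}
$$
for any completely additive $g$, so the bulk of the work is already done and only a specialization remains.

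Next I would substitute $g=\Omega$ and use $\Omega(p)=1$, which collapses the correction term to $\tfrac{1}{2}\sum_{p^{\alpha}\|n}\tfrac{\alpha(\alpha+1)}{p}$. The single genuinely computational step is then to split $\alpha(\alpha+1)=\alpha^{2}+\alpha$ and separate the sum into a piece $\sum_{p^{\alpha}\|n}\tfrac{\alpha^{2}}{p}$ and a piece $\sum_{p^{\alpha}\|n}\tfrac{\alpha}{p}$; the latter is exactly $Ld(n)$ by the definition of the arithmetic logarithmic derivative. Recombining these two pieces with the leading term $\Omega(n)Ld(n)$ then produces the asserted closed form.

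As an independent cross-check I would also recompute the convolution directly. Since $\Lambda_{Ld}$ is supported on prime powers with $\Lambda_{Ld}(p^{i})=1/p$, one has $(\Lambda_{Ld}\ast\Omega)(n)=\sum_{p^{\alpha}\|n}\tfrac{1}{p}\sum_{i=1}^{\alpha}\Omega(n/p^{i})$, and the relation $\Omega(n/p^{i})=\Omega(n)-i$ reduces each inner sum to $\alpha\Omega(n)-\tfrac{\alpha(\alpha+1)}{2}$. This second route uses no new ideas but is worth carrying out because the statement presents no deep obstacle at all: the only real hazard is the bookkeeping of the $\tfrac{1}{2}$ factors and the correct identification of the $Ld(n)$ subsum, which is precisely where an arithmetic slip would be most likely to occur, so I would make sure both routes agree before declaring the corollary proved.
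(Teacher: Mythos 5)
Your strategy is exactly the intended one---specializing Theorem~\ref{the-3-2} to $g=\Omega$, with the direct convolution as a cross-check---and your algebra is sound up to the last line, but the final claim that recombining the pieces ``produces the asserted closed form'' fails. From Theorem~\ref{the-3-2} with $\Omega(p)=1$, splitting $\alpha(\alpha+1)=\alpha^{2}+\alpha$, you correctly obtain
\begin{equation*}
\left(\Lambda_{Ld}\ast\Omega\right)(n)=\Omega(n)Ld(n)-\frac{1}{2}\sum_{p^{\alpha}\parallel n}\frac{\alpha^{2}}{p}-\frac{1}{2}\sum_{p^{\alpha}\parallel n}\frac{\alpha}{p}
=\Omega(n)Ld(n)-\frac{1}{2}Ld(n)-\frac{1}{2}\sum_{p^{\alpha}\parallel n}\frac{\alpha^{2}}{p},
\end{equation*}
in which both correction terms carry a factor $\frac{1}{2}$, whereas the corollary as printed has $-Ld(n)-\sum_{p^{\alpha}\parallel n}\frac{\alpha^{2}}{p}$ with coefficient $1$ on each. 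These are not equal, so the specialization does not yield the stated right-hand side; you flagged the $\frac{1}{2}$ bookkeeping as the main hazard and then passed over it at precisely the critical moment. Your second route (using $\Omega(n/p^{i})=\Omega(n)-i$) gives the same $\frac{1}{2}$-version, so your two routes agree with each other but not with the statement, and asserting that they confirm the corollary is the gap.

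In fact no proof can close this gap, because the corollary as printed is false and your derivation gives the correct version. Take $n=4$: directly, $\left(\Lambda_{Ld}\ast\Omega\right)(4)=\Lambda_{Ld}(2)\Omega(2)+\Lambda_{Ld}(4)\Omega(1)=\frac{1}{2}\cdot 1+\frac{1}{2}\cdot 0=\frac{1}{2}$, while the printed right-hand side gives $2\cdot 1-1-\frac{4}{2}=-1$; your formula gives $2-\frac{1}{2}-1=\frac{1}{2}$ as it should. (Even simpler, $n=p$ gives left side $0$ but printed right side $-\frac{1}{p}$.) The correct conclusion of your argument is
\begin{equation*}
\left(\Lambda_{Ld}\ast\Omega\right)(n)=\Omega(n)Ld(n)-\frac{1}{2}Ld(n)-\frac{1}{2}\sum_{p^{\alpha}\parallel n}\frac{\alpha^{2}}{p},
\end{equation*}
i.e.\ the paper's corollary should be emended by halving its last two terms; your write-up should state this discrepancy explicitly rather than assert agreement with the published formula.
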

\subsection{The function $A$ of the sum of all prime factors in the prime factorization : }
The function $A$ (OEIS A001414) which gives the sum of prime factors (with repetition) of a number $n$ is one of the arithmetic functions that studied by K. ALLADI and P. ERDOS  (see, e.g., \cite{erdos}), So in this section we study this function and we give some result about dirichlet product of this function with many classical arithmetic function.\\
Clearly, the funtion $A$ is completely additive due to the uniqueness of the prime factorization of every integer n .then The von mangoldt function of $A$ is defined by :
$$
\Lambda_{A} (n)={\begin{cases} p &{\text{if }}n=p^{k}{\text{ for some prime }}p{\text{ and integer }}k\geq 1,\\0&{\text{otherwise.}}\end{cases}}
$$
then by the corollary (\ref{cor-2-2}) and (\ref{cor-2-2}) we have :
\begin{corollary}
	for $n>1$  we have :
	\begin{equation}
		A(n)=\left( 1\ast \Lambda_{A}\right)(n)
	\end{equation}
	\begin{equation}
		\Lambda_{A}(n)=\left( \mu\ast A\right) (n)=-\left( 1\ast A\mu\right)(n) 
	\end{equation}
\end{corollary}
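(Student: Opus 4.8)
The plan is to observe that the entire statement is simply an instance of the general completely-additive machinery already established in Subsection~2.3, so that both displayed identities follow by a single substitution once the hypothesis is checked. Concretely, I would first confirm that $A$ is completely additive, then read off the shape of $\Lambda_A$ from the definition \eqref{equ-21}, and finally invoke Corollary~\ref{cor-2-2} and Corollary~\ref{cor-2-3} with $f=A$.

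First I would verify complete additivity of $A$. Writing $m=\prod_i p_i^{\beta_i}$ and $n=\prod_i p_i^{\gamma_i}$ over a common list of primes (allowing zero exponents), we have $mn=\prod_i p_i^{\beta_i+\gamma_i}$, so $A(mn)=\sum_i(\beta_i+\gamma_i)p_i=\sum_i\beta_i p_i+\sum_i\gamma_i p_i=A(m)+A(n)$, with no coprimality needed. Hence $A$ is completely additive, and as noted in the text every completely additive function is L-additive with $h_A\equiv 1$. Substituting $f=A$, $h_A(p)=1$ and $A(p)=p$ into \eqref{equ-21} then gives exactly $\Lambda_A(p^k)=p$ for prime powers and $\Lambda_A(n)=0$ otherwise, matching the stated form of $\Lambda_A$.

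With $A$ placed inside the completely-additive setting, the first identity $A=1\ast\Lambda_A$ is precisely Corollary~\ref{cor-2-2} applied to $f=A$, and the chain $\Lambda_A=\mu\ast A=-1\ast A\mu$ is precisely Corollary~\ref{cor-2-3} applied to $f=A$. The first equality there expresses Möbius inversion of $A=1\ast\Lambda_A$, while the last equality rests on the rewriting $\sum_{d\mid n}\mu(n/d)A(d)=-\sum_{d\mid n}\mu(d)A(d)$ valid for completely additive $A$, which is exactly the computation carried out in the proof of Theorem~\ref{theo-2-2} specialized to $h_f\equiv 1$.

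I do not expect any genuine obstacle here: the only step requiring care is the verification of complete additivity, and even that is routine, since $A(n)=\sum_i\alpha_i p_i$ is manifestly linear in the exponent vector under multiplication. Everything else is a transcription of the general corollaries with $h_f\equiv 1$, so the main task is to state the substitution cleanly rather than to discover any new argument.
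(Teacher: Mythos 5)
Your proposal is correct and follows exactly the paper's route: the paper likewise asserts that $A$ is completely additive (hence L-additive with $h_A\equiv 1$, so $\Lambda_A(p^k)=A(p)=p$) and then obtains both identities by substituting $f=A$ into Corollary~\ref{cor-2-2} and Corollary~\ref{cor-2-3}. The only difference is that you spell out the routine verification of complete additivity that the paper dismisses with ``clearly,'' and you silently correct the paper's typo of citing Corollary~\ref{cor-2-2} twice where Corollary~\ref{cor-2-3} is meant.
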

Substituting $f(n)=A(n)$ into the corollary (\ref{cor-2-8}) gives that :
\begin{corollary}\label{cor-3-6}
	For every integer $n>1$ we have
	\begin{equation}
		\left(\Lambda_{A}\ast\tau \right)(n)=\frac{1}{2}\tau(n)A(n)
	\end{equation}
\end{corollary}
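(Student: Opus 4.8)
The plan is to invoke Corollary \ref{cor-2-8} directly, since the statement is the specialization $f=A$ of that result. The only genuine thing to check is that $A$ satisfies the hypothesis, namely that $A$ is completely additive. This is exactly the point established in the paragraph preceding the corollary: because every integer has a unique prime factorization, writing $n=\prod_i p_i^{\alpha_i}$ and $m=\prod_i p_i^{\beta_i}$ gives $A(nm)=\sum_i(\alpha_i+\beta_i)p_i=A(n)+A(m)$ for all $n,m$ (not merely coprime ones), so $A$ is completely additive. Hence $A$ is an admissible choice of $f$.

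With that verified, I would simply apply Corollary \ref{cor-2-8} with $f=A$ to obtain
$$
(\tau\ast\Lambda_A)(n)=\frac{1}{2}A(n)\tau(n),
$$
and then appeal to the commutativity of the Dirichlet convolution (equation \eqref{dir-prod}), which gives $\tau\ast\Lambda_A=\Lambda_A\ast\tau$, to rewrite the left-hand side in the order stated in the corollary. That already completes the argument.

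If I wanted a fully self-contained derivation rather than a one-line citation, I would instead start from Corollary \ref{cor-2-1}, namely $(\tau\ast\Lambda_f)(n)=\frac{f(n)\tau(n)}{2h_f(n)}$, and substitute $f=A$ together with $h_A(n)=1$ (valid since completely additive functions are L-additive with $h_f\equiv 1$, as noted after Theorem \ref{the-1-1}). The factor $h_A(n)=1$ then collapses the denominator and returns $\frac{1}{2}A(n)\tau(n)$.

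I do not expect any real obstacle here: the content is entirely inherited from the general L-additive machinery, and the specialization is routine. The only place where care is needed is the verification of complete additivity of $A$, and even that is immediate from unique factorization; everything downstream is a substitution of $h_f\equiv 1$ into a previously proved identity.
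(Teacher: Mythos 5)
Your proposal matches the paper's own argument exactly: the paper also obtains this result by substituting $f=A$ into Corollary~\ref{cor-2-8}, having just noted that $A$ is completely additive by uniqueness of prime factorization. Your additional remarks (commutativity of the Dirichlet convolution and the alternative route through Corollary~\ref{cor-2-1} with $h_A\equiv 1$) are correct but only make explicit what the paper leaves implicit.
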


Substituting $f(n)=A(n)$ into the Ennaoui-Selberg identity (\ref{cor-2-7}) to find that :
\begin{equation}
	A(n)\Lambda_A(n)+\left( \Lambda_A\ast\Lambda_A\right) (n)=\left( \mu\ast A^2\right) (n)
\end{equation}

Now by using the theorem (\ref{the-3-2}) if we take $g(n)=A(n)$ we have that :
\begin{equation}\label{equ-a-ld}
	\left(\Lambda_{Ld}\ast A \right) (n)=A(n)Ld(n)-\frac{1}{2}\left(\Omega_2(n)+\Omega(n) \right) 
\end{equation}
substituting $f=Ld$ and $g=A$ into the corollary (\ref{cor-2-4}) then we have :
\begin{equation}
	\left(1\ast A\Lambda_{Ld} \right)=\left(1\ast Ld\Lambda_{A} \right) (n)=\frac{1}{2}\left(\Omega_2(n)+\Omega(n) \right) 
\end{equation}
\begin{theorem}\label{the-3-4}	for every integer $n>0$ we have :
	\begin{equation}
		\left( \Lambda_{A}\ast\beta_k\right)(n)=A(n)\beta_k(n)-\beta_{k+1}(n)
	\end{equation}
\end{theorem}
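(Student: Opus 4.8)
The plan is to mirror the computations already carried out in Theorem~\ref{the-3-1} and Theorem~\ref{the-3-3}, since the von Mangoldt function $\Lambda_A$ differs from $\Lambda_\Omega$ and $\Lambda_{Ld}$ only in its value on prime powers, namely $\Lambda_A(p^i)=p$. First I would expand the Dirichlet convolution and use that $\Lambda_A$ is supported on prime powers, so that for $n=p_1^{\alpha_1}\cdots p_s^{\alpha_s}$ only prime-power divisors survive:
$$
\left(\Lambda_A\ast\beta_k\right)(n)=\sum_{d\mid n}\Lambda_A(d)\beta_k\left(\frac{n}{d}\right)=\sum_{p^{\alpha}||n}\sum_{i=1}^{\alpha}\Lambda_A(p^i)\beta_k\left(\frac{n}{p^i}\right)=\sum_{p^{\alpha}||n}p\sum_{i=1}^{\alpha}\beta_k\left(\frac{n}{p^i}\right).
$$

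The second step is to evaluate the inner sum $\sum_{i=1}^{\alpha}\beta_k\left(\frac{n}{p^i}\right)$. Here I would use that $\beta_k$ depends only on the \emph{set} of prime divisors: dividing $n$ by $p^i$ with $1\le i<\alpha$ leaves $p$ as a divisor, so $\beta_k\left(\frac{n}{p^i}\right)=\beta_k(n)$, whereas for $i=\alpha$ the prime $p$ is removed and $\beta_k\left(\frac{n}{p^{\alpha}}\right)=\beta_k(n)-p^k$. Summing over $i$ then gives $\sum_{i=1}^{\alpha}\beta_k\left(\frac{n}{p^i}\right)=\alpha\beta_k(n)-p^k$, which is exactly the book-keeping performed in the earlier $\beta_k$ theorems.

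Finally I would substitute this back and split the resulting sum over $p$:
$$
\left(\Lambda_A\ast\beta_k\right)(n)=\sum_{p^{\alpha}||n}p\bigl(\alpha\beta_k(n)-p^k\bigr)=\beta_k(n)\sum_{p^{\alpha}||n}\alpha p-\sum_{p^{\alpha}||n}p^{k+1}.
$$
Recognizing $\sum_{p^{\alpha}||n}\alpha p=A(n)$ and $\sum_{p^{\alpha}||n}p^{k+1}=\sum_{p\mid n}p^{k+1}=\beta_{k+1}(n)$ yields the claimed identity. The argument is entirely routine, and the only point requiring care is the $i=\alpha$ term in the inner sum; the reason the correction term is $\beta_{k+1}$ here rather than $\beta_k$ as in Theorem~\ref{the-3-1} is simply that the prefactor $p$ multiplies the lost term $p^k$ to produce $p^{k+1}$.
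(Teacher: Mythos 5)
Your proof is correct and follows essentially the same route as the paper's: expand the convolution over prime-power divisors, evaluate $\sum_{i=1}^{\alpha}\beta_k\left(\frac{n}{p^i}\right)=\alpha\beta_k(n)-p^k$, and regroup to recognize $A(n)$ and $\beta_{k+1}(n)$. In fact you spell out the $i<\alpha$ versus $i=\alpha$ case distinction more explicitly than the paper does, which only improves the exposition.
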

\begin{proof}
	for any positive integer $n$ such that  $n = p_1^{\alpha _1}\ldots  p_s^{\alpha _s}$ we have :
	\begin{align*}
		\left( \Lambda_{A}\ast\beta_k\right)(n)&=	\sum _{d\mid n} \Lambda_{A}(d)\beta_k\left(\frac{n}{d} \right)  =\sum \limits_{p^{\alpha}||n}\sum \limits_{i=1}^{i=\alpha} \Lambda_{A}(p^{i})\beta_k\left(\frac{n}{p^{i}} \right) 
		\\
		&=\sum \limits_{p^{\alpha}||n}\sum \limits_{i=1}^{i=\alpha} p\beta_k\left(\frac{n}{p^{i}} \right) 
		\\
		&=\sum \limits_{p^{\alpha}||n}\left( -p^{k+1} +p\sum \limits_{i=1}^{i=\alpha} \beta_k\left(n\right)\right) 
		\\
		&=\sum \limits_{p^{\alpha}||n} -p^{k+1}+\alpha p\beta_k\left(n\right)
		\\
		&=\beta_k(n)\sum \limits_{p^{\alpha}||n}\alpha p-\sum \limits_{p^{\alpha}||n} p^{k+1}
		\\
		&=\beta_k(n)A(n)-\beta_{k+1}(n)
	\end{align*}
\end{proof}

The definition (\ref{def-mangol}) may be considered a generalized von mangoldt function. This terminology arises from the observation that the logarithm is L-additive with $h_{log}(n)=1$; it satisfies the usual von mangoldt function denoted by :
$$
\Lambda(n)=\Lambda_{log} (n)={\begin{cases}\frac{log(p)}{h_{log}(p)}=log(p) &{\text{if }}n=p^{k}{\text{ for some prime }}p{\text{ and integer }}k\geq 1,\\0&{\text{otherwise.}}\end{cases}}
$$
By using the corollary (\ref{cor-2-2}) and (\ref{cor-2-3}) the Properties of the von mangoldt function is hold and we have 
$$
log=1\ast \Lambda \;\;\;and\;\;\; \Lambda=\mu\ast\log=-1\ast\mu\log
$$

	\section{the generalized von mangoldt functions in terms of the Dirichlet serie}

Above we have seen that many fundamental properties of the generalized von mangoldt function. We complete this article by changing our point of view slightly and demonstrate that generalized von mangoldt function can also be studied in terms of the Dirichlet series . 

\medskip

Dirichlet product defined in (\ref{dir-prod}) occurs naturally in the study of Dirichlet series such as the Riemann zeta function. It describes the multiplication of two Dirichlet series in terms of their coefficients: 
\begin{equation}\label{eq:5}
	\bigg(\sum \limits_{n\geq 1}\frac{\big(f*g\big)(n)}{n^s}\bigg)=\bigg(\sum \limits_{n\geq 1}\frac{f(n)}{n^s} \bigg)
	\bigg( \sum \limits_{n\geq 1}\frac{g(n)}{n^s} \bigg)
\end{equation}
with Riemann zeta function or  is defined by : 
$$\zeta(s)= \sum \limits_{n\geq 1} \frac{1}{n^s}$$
These functions are widely studied in the literature (see, e.g., \cite{book1, book2, book3}).\\
For later convenience we introduce the prime zeta function, described in Froberg (1968) (see, e.g., \cite{et-prime}), denoted by $P(s)$. We define it by :
$$P(s)= \sum \limits_{p} \frac{1}{p^s}$$
	In the next of this section we will use this notation :
$$
P_f(s)=\sum \limits_{p}\frac{f(p)}{p^{s}-1}
$$ 
\begin{theorem}	
	For $s\in\mathbb{C}$ such that $Re(s)>1$ we have :
	\begin{equation*}
		\sum \limits_{n\geq 1} \frac{\tau(n)\Omega(n)}{n^s}=2\zeta^2(s)P_{\Omega}(s)
	\end{equation*}
\end{theorem}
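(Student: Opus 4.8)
The plan is to recognize $\tau(n)\Omega(n)$ as twice a Dirichlet convolution and then apply the multiplicativity of Dirichlet series \eqref{eq:5}. First I would invoke Corollary \ref{cor-3-2}, which states that $(\Lambda_\Omega\ast\tau)(n)=\tfrac{1}{2}\tau(n)\Omega(n)$; equivalently, as arithmetic functions, $\tau\cdot\Omega=2(\Lambda_\Omega\ast\tau)$. Dividing by $n^s$ and summing then gives
$$
\sum_{n\geq 1}\frac{\tau(n)\Omega(n)}{n^s}=2\sum_{n\geq 1}\frac{(\Lambda_\Omega\ast\tau)(n)}{n^s}.
$$

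Next I would factor this single Dirichlet series into a product of two by applying \eqref{eq:5} with $f=\Lambda_\Omega$ and $g=\tau$, obtaining
$$
\sum_{n\geq 1}\frac{(\Lambda_\Omega\ast\tau)(n)}{n^s}=\left(\sum_{n\geq 1}\frac{\Lambda_\Omega(n)}{n^s}\right)\left(\sum_{n\geq 1}\frac{\tau(n)}{n^s}\right).
$$
It then remains to identify the two factors. For the second factor, since $\tau=1\ast 1$, another application of \eqref{eq:5} with $f=g=1$ shows that $\sum_{n\geq 1}\tau(n)/n^s=\zeta^2(s)$. For the first factor, Corollary \ref{cor-2-9} with $f=\Omega$ yields $\sum_{n\geq 1}\Lambda_\Omega(n)/n^s=\sum_p \Omega(p)/(p^s-1)$; since $\Omega(p)=1$ for every prime $p$, this is exactly $\sum_p 1/(p^s-1)=P_\Omega(s)$ in the paper's notation. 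Multiplying the three pieces together gives $2\zeta^2(s)P_\Omega(s)$, as claimed.

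The structural identity makes the computation itself routine, so the only genuine point requiring care is convergence. I expect the main obstacle to be justifying the termwise rearrangement encoded in \eqref{eq:5}, which rests on absolute convergence. For $Re(s)>1$ the series for $\zeta(s)$ converges absolutely, and the prime series $\sum_p(p^s-1)^{-1}$ is dominated by $\sum_p p^{-Re(s)}\bigl(1-p^{-Re(s)}\bigr)^{-1}$, which converges by comparison with $P(Re(s))$; hence both $\zeta^2(s)$ and $P_\Omega(s)$ are absolutely convergent on the half-plane $Re(s)>1$, and the formal manipulation of Dirichlet series is legitimate there. This is precisely the range in which the stated identity is asserted, so no further work is needed.
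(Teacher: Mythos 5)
Your proposal is correct and follows essentially the same route as the paper: invoke Corollary \ref{cor-3-2} to write $\tau\cdot\Omega=2(\Lambda_\Omega\ast\tau)$, factor the Dirichlet series via \eqref{eq:5}, and identify the factors as $\zeta^2(s)$ and $P_\Omega(s)$ (the latter through Corollary \ref{cor-2-9}). Your two small additions --- deriving $\sum_{n\geq 1}\tau(n)/n^s=\zeta^2(s)$ from $\tau=1\ast 1$ instead of citing a table, and explicitly checking absolute convergence for $Re(s)>1$ --- are refinements the paper omits, not a different argument.
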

\begin{proof}
Let $s\in\mathbb{C}$ such that $Re(s)>1$ , then by using the corollary $\left( \ref{cor-3-2}\right)$  we have : 
	$$
		\left(\Lambda_{\Omega}\ast\tau \right)(n)=\frac{1}{2}\tau(n)\Omega(n)
	$$
then by the formula $\left(\ref{eq:5} \right) $ we have  :
$$
\sum \limits_{n\geq 1} \frac{\tau(n)\Omega(n)}{n^s}=\sum \limits_{n\geq 1} \frac{\left(\Lambda_{\Omega}\ast\tau \right)(n)}{n^s}=2
\left(\sum \limits_{n\geq 1} \frac{\tau(n)}{n^s} \right) \left(\sum \limits_{n\geq 1} \frac{\Lambda_{\Omega}(n)}{n^s} \right) 
$$
Since (see, e.g., \cite{cat-serie}) :
$$
\sum \limits_{n\geq 1} \frac{\tau(n)}{n^s}=\zeta^2(s)
$$
and by the corollary (\ref{cor-2-9}) we have :
$$
P_{\Omega}(s)=\sum \limits_{n\geq 1} \frac{\Lambda_{\Omega}(n)}{n^s}
$$ then we conclude that :
$$
	\sum \limits_{n\geq 1} \frac{\tau(n)\Omega(n)}{n^s}=2\zeta^2(s)P_{\Omega}(s)
$$
\end{proof}
\begin{theorem}	
	For $s\in\mathbb{C}$ such that $Re(s)>max (1, 1 + k)$ we have :
	\begin{equation*}
		\sum \limits_{n\geq 1} \frac{\Omega(n)\beta_k(n)}{n^s}=\zeta(s)P(s-k)\bigg(P_{\Omega}(s)+1 \bigg) 
	\end{equation*}
\end{theorem}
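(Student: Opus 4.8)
The plan is to reduce everything to a single application of the Dirichlet convolution formula \eqref{eq:5}, using the factorization of $\Omega\beta_k$ already recorded in Theorem~\ref{the-3-1}. That theorem gives the pointwise identity $(\Lambda_{\Omega}\ast\beta_k)(n)=\Omega(n)\beta_k(n)-\beta_k(n)$, which I would rearrange as $\Omega(n)\beta_k(n)=(\Lambda_{\Omega}\ast\beta_k)(n)+\beta_k(n)$. Dividing by $n^s$ and summing over $n\geq 1$ then splits the target series into two pieces: the Dirichlet series of a Dirichlet convolution, and the Dirichlet series of $\beta_k$ on its own.

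For the first piece I would invoke \eqref{eq:5} to write $\sum_{n\geq 1}(\Lambda_{\Omega}\ast\beta_k)(n)n^{-s}$ as the product $\big(\sum_{n\geq 1}\Lambda_{\Omega}(n)n^{-s}\big)\big(\sum_{n\geq 1}\beta_k(n)n^{-s}\big)$, and then identify the first factor. By Corollary~\ref{cor-2-9} applied to the completely additive function $\Omega$ (so that $\Omega(p)=1$), this factor equals $\sum_{p}(p^s-1)^{-1}=P_{\Omega}(s)$. Consequently both surviving pieces are expressed through the single series $\sum_{n\geq 1}\beta_k(n)n^{-s}$, and the whole target collapses to $\big(P_{\Omega}(s)+1\big)\sum_{n\geq 1}\beta_k(n)n^{-s}$.

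The real computational step, and the one place where I expect to have to be careful, is evaluating $\sum_{n\geq 1}\beta_k(n)n^{-s}$ in closed form. Writing $\beta_k(n)=\sum_{p\mid n}p^k$ and interchanging the two summations — legitimate in the region $\Re(s)>\max(1,1+k)$, where everything converges absolutely — I would group by the prime $p$, so that $\sum_{n\geq 1}\beta_k(n)n^{-s}=\sum_{p}p^k\sum_{\substack{n\geq 1\\ p\mid n}}n^{-s}$. The inner sum runs over multiples of $p$ and equals $\sum_{m\geq 1}(pm)^{-s}=p^{-s}\zeta(s)$, giving $\sum_{n\geq 1}\beta_k(n)n^{-s}=\zeta(s)\sum_{p}p^{k-s}=\zeta(s)P(s-k)$. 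The convergence bookkeeping is exactly what forces the hypothesis $\Re(s)>\max(1,1+k)$: the factor $\zeta(s)$ demands $\Re(s)>1$, while $P(s-k)$ demands $\Re(s-k)>1$.

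Substituting $\sum_{n\geq 1}\beta_k(n)n^{-s}=\zeta(s)P(s-k)$ into the collapsed expression from the second paragraph yields $\zeta(s)P(s-k)\big(P_{\Omega}(s)+1\big)$, which is the claimed identity. The only genuine obstacle is justifying the interchange of summation order together with its region of absolute convergence; once that is in hand, the result is a direct assembly of Theorem~\ref{the-3-1}, Corollary~\ref{cor-2-9}, and the series multiplication \eqref{eq:5}.
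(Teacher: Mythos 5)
Your proposal is correct and follows essentially the same route as the paper's proof: rearrange the identity of Theorem~\ref{the-3-1} to isolate $\Omega(n)\beta_k(n)$, apply the series multiplication formula \eqref{eq:5} to the convolution term, and identify $\sum_{n\geq 1}\Lambda_{\Omega}(n)n^{-s}=P_{\Omega}(s)$ via Corollary~\ref{cor-2-9}. The only difference is that you derive $\sum_{n\geq 1}\beta_k(n)n^{-s}=\zeta(s)P(s-k)$ directly by interchanging the order of summation, whereas the paper cites this identity from a catalog of Dirichlet series; your derivation is valid in the stated region and merely makes the same argument self-contained.
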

\begin{proof}
	Let $s\in\mathbb{C}$ such that $Re(s)>Max(1,k+1)$ , then By using the theorem $\left( \ref{the-3-1}\right)$  we have : 
	$$
	\left( \Lambda_{\Omega}\ast\beta_k\right)(n)=\Omega(n)\beta_k(n)-\beta_k(n)
	$$
	then by the formula $\left(\ref{eq:5} \right) $ we have  :
	$$
	\sum \limits_{n\geq 1} \frac{\Omega(n)\beta_k(n)}{n^s}=\sum \limits_{n\geq 1} \frac{	\left( \Lambda_{\Omega}\ast\beta_k\right)(n)}{n^s}+\sum \limits_{n\geq 1} \frac{	\beta_k(n)}{n^s}=
	\left(\sum \limits_{n\geq 1} \frac{\beta_k(n)}{n^s} \right) \left(\sum \limits_{n\geq 1} \frac{\Lambda_{\Omega}(n)}{n^s} \right)+\sum \limits_{n\geq 1} \frac{	\beta_k(n)}{n^s}
	$$
	Since (see, e.g., \cite{cat-serie}) :
	$$
	\sum \limits_{n\geq 1} \frac{\beta_k(n)}{n^s}=\zeta(s)P(s-k)
	$$
	and by the corollary (\ref{cor-2-9}) we have :
	$$
	P_{\Omega}(s)=\sum \limits_{n\geq 1} \frac{\Lambda_{\Omega}(n)}{n^s}
	$$ then we conclude that :
	$$
	\sum \limits_{n\geq 1} \frac{\Omega(n)\beta_k}{n^s}=\zeta(s)P(s-k)P_{\Omega}(s)+\zeta(s)P(s-k)
	$$
	which completes the proof
\end{proof}
\begin{theorem}	
	For $s\in\mathbb{C}$ such that $Re(s)>max (1, k-1)$ we have :
	\begin{equation*}
		\sum \limits_{n\geq 1} \frac{Ld(n)\beta_k(n)}{n^s}=\zeta(s)\bigg(P(s-k)P_{Ld}(s)+P(s-k+1) \bigg) 
	\end{equation*}
\end{theorem}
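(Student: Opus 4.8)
The plan is to follow exactly the template of the two preceding theorems: rewrite the pointwise product $Ld(n)\beta_k(n)$ as a Dirichlet convolution plus a correction term, and then pass to Dirichlet series using the multiplicativity formula \eqref{eq:5}. The key input is Theorem \ref{the-3-3}, which gives
$$
\left(\Lambda_{Ld}\ast\beta_k \right)(n)=Ld(n)\beta_k(n)-\beta_{k-1}(n),
$$
so that $Ld(n)\beta_k(n)=\left(\Lambda_{Ld}\ast\beta_k \right)(n)+\beta_{k-1}(n)$. Summing both sides against $n^{-s}$ and splitting the sum, I would write
$$
\sum_{n\geq 1}\frac{Ld(n)\beta_k(n)}{n^s}=\sum_{n\geq 1}\frac{\left(\Lambda_{Ld}\ast\beta_k \right)(n)}{n^s}+\sum_{n\geq 1}\frac{\beta_{k-1}(n)}{n^s}.
$$

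First I would handle the convolution term. By \eqref{eq:5} it factors as the product of the Dirichlet series of $\Lambda_{Ld}$ and of $\beta_k$. For the first factor, Corollary \ref{cor-2-9} applied with $f=Ld$ gives $\sum_{n\geq 1}\Lambda_{Ld}(n)n^{-s}=P_{Ld}(s)$, in the notation $P_f(s)=\sum_p f(p)/(p^s-1)$. For the second factor I would invoke the evaluation $\sum_{n\geq 1}\beta_k(n)n^{-s}=\zeta(s)P(s-k)$ already used in the previous proof. Hence the convolution term equals $\zeta(s)P(s-k)P_{Ld}(s)$.

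Next I would treat the correction term. Applying the same evaluation of the Dirichlet series of $\beta_j$ with $j=k-1$ gives $\sum_{n\geq 1}\beta_{k-1}(n)n^{-s}=\zeta(s)P\big(s-(k-1)\big)=\zeta(s)P(s-k+1)$. Adding the two pieces and factoring out $\zeta(s)$ then yields
$$
\sum_{n\geq 1}\frac{Ld(n)\beta_k(n)}{n^s}=\zeta(s)\Big(P(s-k)P_{Ld}(s)+P(s-k+1)\Big),
$$
which is precisely the claimed identity.

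Since every step is a direct substitution, there is no genuine computational obstacle: the algebra is immediate from Theorem \ref{the-3-3}. The only point requiring care is justifying the rearrangements, that is, ensuring that all the Dirichlet series involved converge absolutely so that \eqref{eq:5} and the term-by-term splitting are legitimate. This is exactly what a hypothesis of the form stated is meant to guarantee, with $\zeta(s)$ convergent for $\mathrm{Re}(s)>1$ and the prime-zeta factors $P(s-k)$, $P(s-k+1)$ (together with the prime series defining $P_{Ld}$) convergent in the appropriate right half-plane. I expect this convergence bookkeeping to be the main thing to verify.
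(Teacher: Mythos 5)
Your proposal is correct and follows essentially the same route as the paper's own proof: the identity of Theorem \ref{the-3-3} to decompose $Ld(n)\beta_k(n)$, the factorization \eqref{eq:5} for the convolution term with $\sum_{n\geq 1}\beta_k(n)n^{-s}=\zeta(s)P(s-k)$ and Corollary \ref{cor-2-9} giving $P_{Ld}(s)$, and the shift $j=k-1$ for the correction term. Your closing remark about absolute-convergence bookkeeping is in fact more careful than the paper, which asserts the half-plane hypothesis without verifying it.
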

\begin{proof}
	Let $s\in\mathbb{C}$ such that $Re(s)>max (1, k-1)$ , then by  the theorem $\left( \ref{the-3-3}\right)$  we have : 
	$$
		\left(\Lambda_{Ld}\ast\beta_k \right)(n)=Ld(n)\beta_k(n)-\beta_{k-1}(n)
	$$
	then by the formula $\left(\ref{eq:5} \right) $ we have  :
$$
\sum \limits_{n\geq 1} \frac{Ld(n)\beta_k(n)}{n^s}=\sum \limits_{n\geq 1} \frac{	\left( \Lambda_{Ld}\ast\beta_k\right)(n)}{n^s}+\sum \limits_{n\geq 1} \frac{	\beta_{k-1}(n)}{n^s}=
\left(\sum \limits_{n\geq 1} \frac{\beta_k(n)}{n^s} \right) \left(\sum \limits_{n\geq 1} \frac{\Lambda_{Ld}(n)}{n^s} \right)+\sum \limits_{n\geq 1} \frac{	\beta_{k-1}(n)}{n^s}
$$
Since (see, e.g., \cite{cat-serie}) :
$$
\sum \limits_{n\geq 1} \frac{\beta_k(n)}{n^s}=\zeta(s)P(s-k)
$$
and by the corollary (\ref{cor-2-9}) we have :
$$
P_{Ld}(s)=\sum \limits_{n\geq 1} \frac{\Lambda_{Ld}(n)}{n^s}
$$ then we find that :
$$
\sum \limits_{n\geq 1} \frac{Ld(n)\beta_k}{n^s}=\zeta(s)P(s-k)P_{Ld}(s)+\zeta(s)P(s-k+1)
$$
which completes the proof .
\end{proof}
\begin{theorem}	
	For $s\in\mathbb{C}$ such that $Re(s)>max (1, k+2)$ we have :
	\begin{equation*}
		\sum \limits_{n\geq 1} \frac{A(n)\beta_k(n)}{n^s}=\zeta(s)\bigg(P(s-k)P_{Ld}(s)+P(s-k-1) \bigg) 
	\end{equation*}
\end{theorem}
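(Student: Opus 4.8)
The plan is to follow verbatim the template of the three Dirichlet-series theorems that immediately precede this one: take the Dirichlet convolution identity furnished by Theorem \ref{the-3-4}, feed it into the series-multiplication formula \eqref{eq:5}, and then read off each factor from the known prime-zeta expansions. Throughout I work in the half-plane $\mathrm{Re}(s) > \max(1, k+2)$, where all the series involved converge absolutely; this is exactly the region in which the series attached to $\beta_{k+1}$ converges, so no further hypotheses are required.

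First I would invoke Theorem \ref{the-3-4}, which gives for every $n$
$$(\Lambda_{A}\ast\beta_k)(n) = A(n)\beta_k(n) - \beta_{k+1}(n),$$
equivalently $A(n)\beta_k(n) = (\Lambda_{A}\ast\beta_k)(n) + \beta_{k+1}(n)$. Dividing by $n^s$ and summing over $n\geq 1$ splits the target series into a convolution part and a correction part. By \eqref{eq:5} the convolution part factors as $\big(\sum_n \beta_k(n)/n^s\big)\big(\sum_n \Lambda_{A}(n)/n^s\big)$. The first of these is the standard series $\sum_n \beta_k(n)/n^s = \zeta(s)P(s-k)$ already used above, and the correction part is $\sum_n \beta_{k+1}(n)/n^s = \zeta(s)P(s-(k+1)) = \zeta(s)P(s-k-1)$. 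For the remaining factor I would apply Corollary \ref{cor-2-9}, which evaluates the Dirichlet series of a von Mangoldt function attached to a completely additive function as a prime sum of the shape $P_{(\cdot)}(s)$. Substituting these three evaluations and pulling out the common $\zeta(s)$ then assembles the claimed right-hand side $\zeta(s)\big(P(s-k)P_{Ld}(s) + P(s-k-1)\big)$.

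The hard part will be pinning down exactly which prime-zeta factor emerges from Corollary \ref{cor-2-9}, since this is the only step that is not pure bookkeeping. Applied to the completely additive function $A$, Corollary \ref{cor-2-9} produces the prime sum $\sum_p A(p)/(p^s-1) = \sum_p p/(p^s-1)$, and the crux of the argument is to reconcile this factor with the $P_{Ld}(s)$ appearing in the statement; the index shift $k \mapsto k+1$ in the $\beta$-part together with the values $\Lambda_A(p)=p$ make the competing prime-zeta functions easy to conflate, so this identification must be carried out with care. Once that factor is fixed and absolute convergence on $\mathrm{Re}(s)>\max(1,k+2)$ is confirmed, the remaining substitutions are immediate and the proof closes exactly as in the preceding three theorems.
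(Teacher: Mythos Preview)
Your approach is exactly the one the paper takes: invoke Theorem~\ref{the-3-4}, rewrite $A(n)\beta_k(n)$ as $(\Lambda_A\ast\beta_k)(n)+\beta_{k+1}(n)$, apply \eqref{eq:5}, and plug in $\sum_n \beta_k(n)/n^s=\zeta(s)P(s-k)$ together with Corollary~\ref{cor-2-9}. Your caution about the prime-zeta factor is well founded: Corollary~\ref{cor-2-9} applied to $A$ yields $P_A(s)=\sum_p p/(p^s-1)$, and the paper's own proof in fact concludes with $\zeta(s)P(s-k)P_A(s)+\zeta(s)P(s-k-1)$, so the $P_{Ld}(s)$ in the theorem statement is a typographical slip (carried over from the preceding theorem) rather than something you need to reconcile.
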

\begin{proof}
	Let $s\in\mathbb{C}$ such that $Re(s)>max (1, k+2)$ , then by  the theorem $\left( \ref{the-3-4}\right)$  we have : 
	$$
	\left(\Lambda_{A}\ast\beta_k \right)(n)=A(n)\beta_k(n)-\beta_{k+1}(n)
	$$
	then by the formula $\left(\ref{eq:5} \right) $ we have  :
	$$
	\sum \limits_{n\geq 1} \frac{A(n)\beta_k(n)}{n^s}=\sum \limits_{n\geq 1} \frac{	\left( \Lambda_{A}\ast\beta_k\right)(n)}{n^s}+\sum \limits_{n\geq 1} \frac{	\beta_{k+1}(n)}{n^s}=
	\left(\sum \limits_{n\geq 1} \frac{\beta_k(n)}{n^s} \right) \left(\sum \limits_{n\geq 1} \frac{\Lambda_{A}(n)}{n^s} \right)+\sum \limits_{n\geq 1} \frac{	\beta_{k+1}(n)}{n^s}
	$$
	Since (see, e.g., \cite{cat-serie}) :
	$$
	\sum \limits_{n\geq 1} \frac{\beta_{k}(n)}{n^s}=\zeta(s)P(s-k)
	$$
	and by the corollary (\ref{cor-2-9}) we have :
	$$
	P_{A}(s)=\sum \limits_{n\geq 1} \frac{\Lambda_{A}(n)}{n^s}
	$$ then we find that :
	$$
	\sum \limits_{n\geq 1} \frac{A(n)\beta_k}{n^s}=\zeta(s)P(s-k)P_{A}(s)+\zeta(s)P(s-k-1)
	$$
	which completes the proof .
\end{proof}
\section{Conclusion :}
	The von Mangoldt function $\Lambda_f$ related to the L-additive function $f$ is another way to be solved many problem of  the Dirichlet series of the  arithmetic function .
		
\end{document}